\PassOptionsToPackage{usenames,dvipsnames}{color}
\documentclass[12pt,letterpaper,english,reqno]{amsart}

\usepackage{amsmath,amsfonts,amssymb,mathrsfs,stackrel}
\usepackage[english]{babel}
\usepackage[utf8]{inputenc}
\usepackage[T1]{fontenc}
\usepackage{graphicx}
\usepackage{tabularx}
\usepackage{mathtools}
\usepackage{hyperref}
\usepackage{cleveref}
\usepackage{comment}
\usepackage[all]{xy}
\usepackage{xurl}
\usepackage{soul}
\usepackage{tikz}
\numberwithin{equation}{section}

\hypersetup{colorlinks,citecolor=blue,linktocpage,hyperindex=true,backref=true}

\newcommand\cB{{\mathcal B}}

\newcommand\cH{{\mathcal H}}

\newcommand\cO{{\mathcal O}}

\newcommand\cT{{\mathcal T}}

\newcommand\cX{{\mathcal X}}
\newcommand\cY{{\mathcal Y}}

\newcommand\bA{{\mathbb A}}

\newcommand\bC{{\mathbb C}}
\newcommand\bD{{\mathbb D}}

\newcommand\bG{{\mathbb G}}

\newcommand\bP{{\mathbb P}}
\newcommand\bQ{{\mathbb Q}}
\newcommand\bR{{\mathbb R}}
\newcommand\bS{{\mathbb S}}

\newcommand\bZ{{\mathbb Z}}

\newcommand\wE{{\widetilde E}}

\DeclareMathOperator{\SL}{SL}
\DeclareMathOperator{\GL}{GL}
\DeclareMathOperator{\Sp}{Sp}

\newtheorem{theorem}{Theorem}[section]

\newtheorem{conjecture}[theorem]{Conjecture}
\newtheorem{corollary}[theorem]{Corollary}
\newtheorem{question}[theorem]{Question}

\newtheorem{lemma}[theorem]{Lemma}
\newtheorem{proposition}[theorem]{Proposition}
\theoremstyle{definition}
\newtheorem{construction}[theorem]{Construction}

\newtheorem{definition}[theorem]{Definition}

\newtheorem{remark}[theorem]{Remark}

\title[Complex $\bS^6$]{Complex structures on $\bS^6$}
\author{Philip Engel}
\address{Department of Mathematics, Statistics, and Computer Science,
University of Illinois Chicago, 851 S Morgan St, Chicago, IL 60607, USA}
\email{pengel@uic.edu}
\date{\today}
\subjclass[2020]{Primary 32J17; Secondary 14J27, 32Q55, 32G20}

\begin{document}
\begin{abstract} 
A recent manuscript produced by Claude
under the direction of Levent Alp\"oge constructs a 1-parameter family 
of compact complex threefolds diffeomorphic to $\bS^6$ \cite{AC26}. 
Our goal is to describe the geometric ideas behind the construction,
give a self-contained proof, and remark on possible future research directions. \end{abstract}
\maketitle

\setcounter{tocdepth}{1}
\tableofcontents

\section{Introduction}\label{sec-intro}

\subsection{History} In 1948, Hopf proved
that $\bS^4$ and $\bS^8$ do not admit complex structures
\cite{Hop48} and in 1953, Borel
and Serre \cite{BS53} used topological methods to 
prove that only $\bS^{2}$ and $\bS^6$ admit an almost
complex structure. Since $\bS^2\simeq \bP^1$, this raises the question:

\begin{question}[Hopf Problem]\label{hopf-q}
Does $\bS^6$ admit a complex structure?
\end{question} 

Yau asked whether every compact almost complex manifold
of dimension at least $6$ also admits a complex structure \cite[Prob.~52]{Yau93}.

Both negative and positive answers to Question \ref{hopf-q} have
been proposed, but these works were
found not to be fully mathematically sound. See 
\cite{ABGKR18} and \cite{Bry21} for discussions. 
Thus, the problem acquired a certain degree of notoriety.
Hypothetical properties of such a complex structure, for instance, 
Dolbeault cohomology groups and algebraic dimension,
were studied in \cite{Gra97}, \cite{CDP98}.

In 2026, L.~Alp\"oge\footnote{The precise nature of the 
interaction between Alp\"oge and Claude is not, at present, documented.}
posted a 108-page manuscript \cite{AC26} produced by an artificial intelligence,
Anthropic's ``Claude'', 
claiming to construct a complex 
structure on $\bS^6$. Perhaps the most remarkable aspect
of the construction is that it fibers holomorphically 
over the Riemann sphere $\bP^1$, contradicting
a nearly 30-year-old result in the literature that the algebraic dimension of such a complex
structure must be zero \cite[Cor.~4.2]{CDP98}. See \cite[\S~10]{AC26} for a further discussion.

\subsection{Outline}

We begin with the rational elliptic surface $S$ having
singular fibers of Kodaira types $IV^*$, $III$, $I_1$.  From a line bundle
$M\in {\rm Pic}(S)$, one forms a principal $\bG_m$-bundle over $S$, then quotients it by a
lift of translation along a section of $S\to \bP^1$. 
This linearization acquires a zero over $\infty\in \bP^1$ where one fills the quotient family
by a Mumford construction.
The result is a compact complex
$3$-fold $X(u)$ fibering over $\bP^1$, with complex $2$-torus fibers away from $0,1,\infty$.
It depends on a single complex parameter $u\in \Delta^*$. See Section \ref{sec-initial}. 

Using the potential good reduction of $S$ over $0,1\in \bP^1$ to perform
logarithmic transforms, one can modify the family over $0,1$. The replaced fibers
have multiplicities $3,4$ respectively, and their reductions are bielliptic surfaces. 
The result is a compact complex $3$-fold $Y=Y(u)$ fibering over $\bP^1$.
See Section \ref{sec-log-mod}. 
The monodromy representation of the fibration 
$Y\to \bP^1$ is computed in Section \ref{sec-monodromy}.
Using this, one checks by van Kampen that $Y$ is simply connected (Section \ref{sec-fd}), 
and by the Leray spectral sequence, that $Y$ is a $\bZ$-homology sphere (Section \ref{sec-homology}).
Hence $Y$ is diffeomorphic to $\bS^6$ (Section \ref{sec-main}).

We speculate about possible future research directions in Section \ref{sec-speculate}.

\subsection{Relation to the original manuscript} In \cite{AC26}, the geometry of the construction
is somewhat obscured. The periods of the complex $2$-tori are the central focus, encoded
by a period matrix
$$\Pi(z) = \begin{pmatrix} 6\mu(z) & \tau(z) & 1 & 0 \\ \beta(z) & \mu(z) & 0 & 1 \end{pmatrix}.$$
Here $z$ is a coordinate on the universal cover of $\bP^1\setminus \{0,1,\infty\}$. Roughly,
the relation to this paper is that $\tau(z)$ is the period of the elliptic fibration $S\to \bP^1$,
$\mu(z)$ encodes the third period arising from the circle of the principal $\bG_m$-bundle, 
and $\beta(z)$ encodes the fourth period, from the quotient by the $\bZ$-action of the lifted translation.

This paper recasts several arguments in \cite{AC26}, such as 
the van Kampen and Leray computations. Since our geometric approach differs 
substantially from a period-theoretic presentation, 
we give a self-contained account of the construction and proof.
We have omitted some material from the original text, 
such as a detailed discussion of \cite{CDP98}
and computations of Dolbeault cohomology groups.

\subsection{AI disclosure} The exploration of the original paper and
drafting of arguments and computations were performed in collaboration 
with ChatGPT. The mathematical content was 
independently verified by the author, who takes responsibility for the
manuscript.

\section{Initial construction}\label{sec-initial}
Let
$
\pi:S\to \bP^1
$
be a rational elliptic surface whose $j$-map has degree $1$, so that
in suitable coordinates,
$
j=1728t
$, and for which there
are three
singular fibers, of Kodaira types
$
IV^*$, $III$,  $I_1$ \cite[p.~206]{Mir90}. 
See Figure \ref{ell-fig}.
The singular fibers
correspond respectively to the order $3$, $4$, $\infty$ 
corners of the $(3,4,\infty)$ triangle.
Concretely,
$$
y^2=x^3-3t^3(t-1)x+2t^4(t-1)^2.
$$

There is a zero section $O$, and the Mordell--Weil group is infinite
cyclic \cite[Main Thm., no.~49]{OS91}. Let
$
P\in {\rm MW}(S)
$
be a generator. The section $P$ passes through non-identity
components on both the $IV^*$ and $III$ fibers, whose component
groups are $\bZ/3\bZ$ and $\bZ/2\bZ$ respectively.

\begin{figure}
\includegraphics[width=0.8\textwidth]{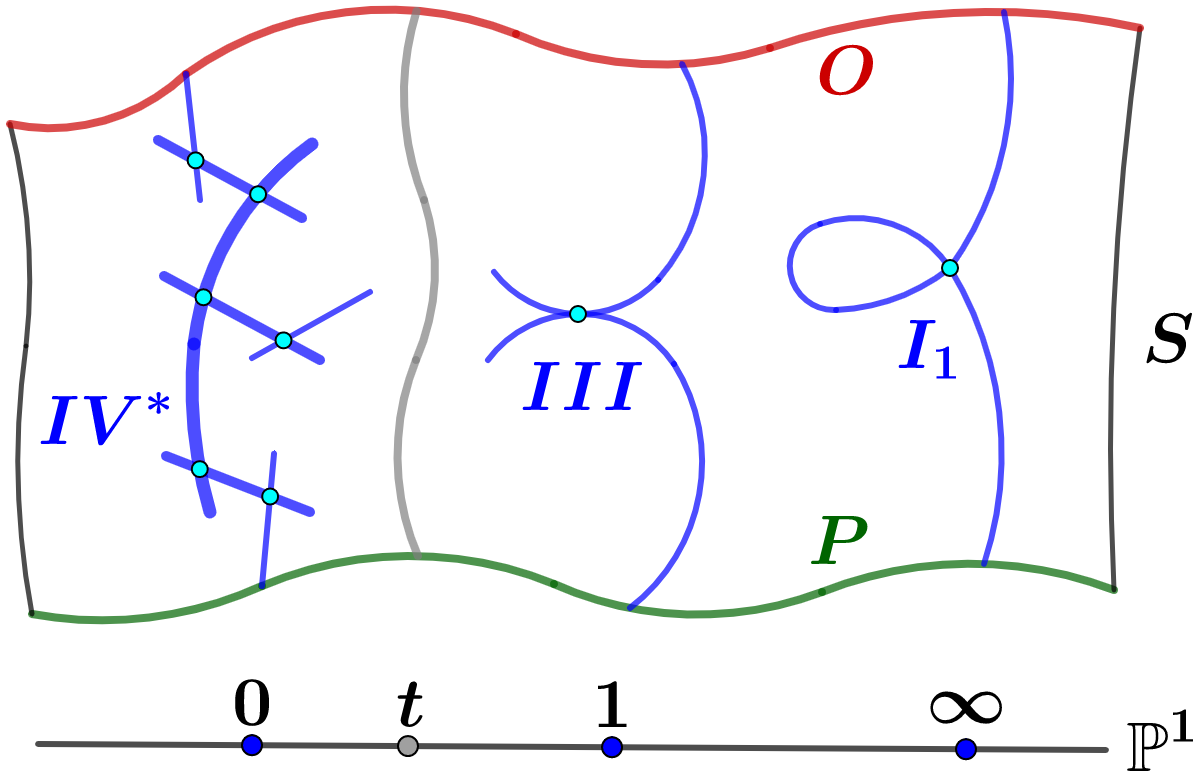}
\caption{Rational elliptic surface $S$ with $IV^*$, $III$, $I_1$ singular fibers
in blue (respectively:~$\wE_6$ configuration, tacnode, and irreducible nodal curve).
Origin section $O$ in red, generator $P$ of the Mordell-Weil group in green.}
\label{ell-fig}
\end{figure}

Consider the line bundle
$
M=\cO_S(P-O).
$
Translation by $6P$ defines an automorphism
$
t_{6P}:S\to S.
$ Since $2$ and $3$ both divide $6$,
the automorphism $t_{6P}$ acts in a manner
preserving the components of the $IV^*$ and $III$
Kodaira fibers.

\begin{lemma}
$\cH om(t_{6P}^*M, M)\simeq  \pi^*\cO_{\bP^1}(1)$.
\end{lemma}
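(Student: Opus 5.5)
The plan is to show that the divisor class of $\cH om(t_{6P}^*M,M)\simeq M\otimes t_{6P}^*M^{-1}$ is vertical and orthogonal to every fiber component, hence a multiple of the fiber class $F$, and then to fix the multiple by intersecting with $O$ via the height pairing.

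\emph{Step 1: rewrite as a divisor of sections.} Since $t_{6P}$ is translation by $6P$, the preimage of a section $Q$ is $t_{6P}^{-1}(Q)=Q-6P$ (group law in ${\rm MW}(S)$). Hence $t_{6P}^*M=\cO_S((-5P)-(-6P))$, and
$$M\otimes t_{6P}^*M^{-1}\simeq \cO_S(D),\qquad D=P-O-(-5P)+(-6P),$$
a formal combination of four sections.

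\emph{Step 2: $D$ is vertical and orthogonal to all fiber components.} On the generic fiber, $D$ has degree $1-1-1+1=0$, and its sum in the group law is $P-(-5P)+(-6P)=0$. So by Abel's theorem $D|_{S_\eta}$ is principal, and hence $D$ is linearly equivalent to a vertical divisor $V$. Because $t_{6P}$ maps each component $C$ of the $IV^*$ and $III$ fibers to itself, we get $\deg(t_{6P}^*M|_C)=\deg(M|_{t_{6P}(C)})=\deg(M|_C)$. Therefore $D\cdot C=0$ for every fiber component $C$. By Zariski's lemma, a vertical divisor orthogonal to all components of every fiber is, fiber by fiber, a rational multiple of the whole fiber. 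Since $\bP^1$ is simply connected and the fibers are linearly equivalent, this gives $V\sim aF$ with $a$ an integer: the fibers $IV^*$ and $III$ have a reduced component, so multiples of the fiber with integer coefficients must be integer multiples. Consequently $\cO_S(D)\simeq \pi^*\cO_{\bP^1}(a)$.

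\emph{Step 3: compute $a=D\cdot O$ (main computation).} We have $O^2=-\chi=-1$, and
$$a=D\cdot O=P\cdot O+1-(-5P)\cdot O+(-6P)\cdot O.$$
Note that $(nP)\cdot O=(-nP)\cdot O$. I would get these intersection numbers from Shioda's height formula
$$h(Q)=2+2\,Q\cdot O-\sum_v{\rm contr}_v(Q).$$
The inputs are the following.
\begin{itemize}
\item The Mordell--Weil lattice for No.~49 in \cite{OS91}, which is $\langle 1/6\rangle$, so $h(nP)=n^2/6$.
\item The $IV^*$ fiber contributes $4/3$ when $n\not\equiv 0\pmod 3$ and $0$ otherwise.
\item The $III$ fiber contributes $1/2$ when $n$ is odd and $0$ otherwise. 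That $P$ meets non-identity components on both fibers is stated above.
\end{itemize}
Plugging in:
\begin{itemize}
\item For $n=1$: $1/6=2+2P\cdot O-4/3-1/2$, so $P\cdot O=0$.
\item For $n=6$: $6=2+2\,(6P)\cdot O$, so $(6P)\cdot O=2$.
\item For $n=5$: $25/6=2+2\,(5P)\cdot O-11/6$, so $(5P)\cdot O=2$.
\end{itemize}
Thus $a=0+1-2+2=1$, giving $\cH om(t_{6P}^*M,M)\simeq\pi^*\cO_{\bP^1}(1)$.

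The main obstacle is Step 3. It depends on having the correct height normalization and local contributions, and on confirming from \cite{OS91} that the generator has height exactly $1/6$. As a cross-check, I would verify $P\cdot O=0$ directly from the Weierstrass equation by exhibiting a polynomial section $(x(t),y(t))$ of low degree.
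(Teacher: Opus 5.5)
Your proof is correct and follows the same route as the paper. First you show that $\cH om(t_{6P}^*M,M)$ is vertical with multidegree zero on every fiber, hence equal to $\pi^*\cO_{\bP^1}(n)$; then you compute $n$ by intersecting with $O$ via Shioda's height formula, using $\langle P,P\rangle=\tfrac16$ and the local contributions $\tfrac43,\tfrac12$. The only difference is bookkeeping: you pull back the divisor to $(-5P)-(-6P)$ and need $(5P)\cdot O=2$, whereas the paper pushes $O$ forward to $6P$ and needs $P\cdot(6P)=2$ from $\langle P,6P\rangle=1$; these agree by translation by $-P$.
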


\begin{proof}
For a general fiber $S_t$ over $t\in  \bP^1\setminus \{0,1,\infty\}$, we have
that $M_t\in {\rm Pic}^0(S_t)$. It follows that $t_{6P}^*M_t\simeq M_t$ for all
$t\in \bP^1\setminus \{0,1,\infty\}$. Hence $t_{6P}^*M\otimes M^{-1}$ is expressible as the line bundle 
associated to a linear combination of vertical divisors. Since $t_{6P}$ acts by an element
of ${\rm Aut}^0$ of every fiber, the line bundle $L\coloneqq \cH om(t_{6P}^*M, M)$ has multidegree
$0$ on every fiber. Thus, $L\simeq \pi^*\cO_{\bP^1}(n)$
for some integer $n$, seeing as $\pi$ has no multiple fibers.

We compute $n$ via Shioda's height pairing. Since $S$ is rational
elliptic, we have $\chi(\cO_S)=1$, $O^2=-1$, and $P\cdot O=0$.
The local contributions at the $IV^*$ and $III$ fibers
are $\frac{4}{3}$ and $\frac{1}{2}$, respectively
\cite[Thm.~8.6 and Table~8.9]{Shi90}. Hence
\[
\langle P,P\rangle
=2-\tfrac{4}{3}-\tfrac{1}{2}
=\tfrac{1}{6}.
\]
Consider the section $Q=6P$. By bilinearity,
\[
\langle Q,Q\rangle
=36\langle P,P\rangle
=6.
\]
Since $Q$ meets the identity component of every
reducible fiber, all local contributions involving $Q$ vanish, and
Shioda's height formula gives
\[
6=\langle Q,Q\rangle=2+2(Q\cdot O).
\]
It follows that
$
(6P)\cdot O=Q\cdot O=2.
$
Similarly,
$
\langle P,Q\rangle
=6\langle P,P\rangle
=1
$
and in turn
$
P\cdot(6P)=P\cdot Q=2.
$

Finally, restricting $L=t_{6P}^*M^{-1}\otimes M$ to the zero section gives
\begin{align*}
n
&=L\cdot O\\
&=(O-P)\cdot(6P)+(P-O)\cdot O\\
&=(2-2)+(0-(-1))=1.\qedhere
\end{align*}
\end{proof}

\begin{corollary}\label{lin-exist}
There is, up to scale,
a unique section $$\phi\in \cH om( t_{6P}^*M, M)(S)$$
which over $\bA^1$ is an isomorphism, i.e.~a linearization of the action
of $t_{6P}$ on $M$, and has a simple zero along the fiber
over $\infty \in \bP^1$.
\end{corollary}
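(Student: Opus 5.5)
By the preceding Lemma, the sheaf $\cH om(t_{6P}^*M, M)$ is isomorphic to $\pi^*\cO_{\bP^1}(1)$. The plan is therefore to reduce to global sections of $\cO_{\bP^1}(1)$ on $\bP^1$.

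First, compute $H^0(S,\pi^*\cO_{\bP^1}(1))$. By the projection formula it equals $H^0(\bP^1,\cO_{\bP^1}(1)\otimes \pi_*\cO_S)$. Since $\pi$ is proper with connected fibers, $\pi_*\cO_S=\cO_{\bP^1}$. Hence
$$H^0(S,\cH om(t_{6P}^*M,M))\cong H^0(\bP^1,\cO_{\bP^1}(1)),$$
a $2$-dimensional space spanned by the linear forms in homogeneous coordinates $[t_0:t_1]$ on $\bP^1$, with $t=t_1/t_0$ the affine coordinate.

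Next, take $\phi=\pi^*s$ with $s=t_0$, the linear form vanishing exactly at $\infty$. Then:
\begin{itemize}
\item $\phi$ is nowhere vanishing over $\bA^1=\{t_0\neq 0\}$. A nowhere vanishing section of $\cH om(t_{6P}^*M,M)$ is an isomorphism $t_{6P}^*M\to M$, i.e.\ a linearization of the action of $t_{6P}$ on $M$ there.
\item The divisor of $\phi$ is $\pi^*[\infty]$. Since $\pi$ has no multiple fibers and the fiber $S_\infty$ is the reduced irreducible $I_1$ curve, $\pi^*[\infty]=S_\infty$ with multiplicity one. So $\phi$ has a simple zero along that fiber.
\end{itemize}

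For uniqueness, any section with zero locus exactly over $\infty$ has the form $\pi^*s'$ with $s'$ a linear form vanishing only at $\infty$. Such an $s'$ is a nonzero scalar multiple of $t_0$, so $\phi$ is unique up to scale. No other element of the pencil qualifies: every other section vanishes along some fiber over a point of $\bA^1$, so it fails to be an isomorphism there.

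The one point requiring care is that the fiber over $\infty$ is reduced, so that the zero is simple. This follows from the identification of $S_\infty$ as the $I_1$ fiber. Everything else is formal given the preceding Lemma.
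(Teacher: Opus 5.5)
Your proposal is correct and follows the paper's route. The paper records this corollary without proof, as an immediate consequence of the preceding Lemma $\cH om(t_{6P}^*M,M)\simeq\pi^*\cO_{\bP^1}(1)$. Your argument supplies exactly the omitted details: the identification $H^0(S,\pi^*\cO_{\bP^1}(1))=H^0(\bP^1,\cO_{\bP^1}(1))$ via $\pi_*\cO_S=\cO_{\bP^1}$, the choice of the linear form vanishing at $\infty$, and the reducedness of the $I_1$ fiber for the simple zero.
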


Let $M^\times \coloneqq M\setminus \{\textrm{zero section}\}.$
It is a threefold, which is a principal $\bG_m$-bundle over 
the rational elliptic surface $S$. Let $S^\circ\coloneqq S\setminus S_\infty$
and $M^{\times, \circ}\coloneqq M^\times \vert_{S^\circ}$.

\begin{definition}\label{z-action}
We define a $\bZ$-action on $M^{\times,\circ}$ by acting by
the translation $(t_{6P})^{-1}$ on the base $S^\circ$ and the linearizing
isomorphism $\phi$ on $M^{\times,\circ}$. That is, the generator of $\bZ$
acts by the lift of $t_{6P}^{-1}$ defined by $\phi$.
\end{definition}

So in particular,
the automorphism of $M^{\times,\circ}$ induced by $\phi$ lifts
the automorphism $t_{6P}^{-1}$ of $S^\circ$.

\begin{proposition}\label{discontinuous}
 Fix a reference linearization $\phi$ as in Corollary
\ref{lin-exist}. There exists a constant
$c>0$ such that for all linearizations $u\phi$, $u\in \bC^*$, with $|u|<c$, the $\bZ$-action 
(Definition \ref{z-action}) associated to $u\phi$ is free and properly discontinuous. \end{proposition}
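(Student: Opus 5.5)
The plan is to build a proper "height function" on $M^{\times,\circ}$ that strictly decreases by a definite amount under the generator of the $\bZ$-action, once $|u|$ is small. Both freeness and proper discontinuity then follow formally.

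\textbf{Setup.} Fix a smooth hermitian metric $h$ on the line bundle $M$ over the compact surface $S$. Pulling back gives a metric $t_{6P}^*h$ on $t_{6P}^*M$, and together these induce a metric on $\cH om(t_{6P}^*M,M)\simeq\pi^*\cO_{\bP^1}(1)$. The reference linearization $\phi$ of Corollary \ref{lin-exist} is a holomorphic, hence continuous, section of this bundle over the compact space $S$. Therefore
\[
C\coloneqq \sup_{s\in S}|\phi(s)|<\infty .
\]
By Corollary \ref{lin-exist}, $\phi$ vanishes only on $S_\infty$, so over $S^\circ$ it is a fiberwise isomorphism. Hence for every $u\in\bC^*$ the generator $g_u$ of the action in Definition \ref{z-action} is a well-defined biholomorphism of $M^{\times,\circ}$ lifting $t_{6P}^{-1}$.

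\textbf{Key estimate.} Define $f\colon M^{\times,\circ}\to\bR$ by $f(v)=\log|v|_h$. Unwinding the definition, $g_u$ sends a nonzero vector $v$ in a fiber $M_s$ to $u\,\phi(v)$, viewed in the fiber of $M$ at the translated point. The norm of $v$ in $t_{6P}^*M$ at the appropriate point equals $|v|_h$, because the metric on $t_{6P}^*M$ is the pullback metric. It follows that
\[
f(g_u v)-f(v)=\log|u|+\log|\phi(s')|\le \log|u|+\log C,
\]
where $s'$ is the relevant base point. I expect the main care needed to be in tracking which point $s'$ appears, given the inverse translation and the $\cH om$ convention. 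However, only the uniform upper bound $C$ is used, so the exact point does not matter. Set $c\coloneqq 1/C$. For $0<|u|<c$ we get $\delta\coloneqq-\log(|u|C)>0$, and hence
\[
f(g_u^n v)\le f(v)-n\delta\quad (n\ge 0).
\]

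\textbf{Conclusion.} For freeness: if $g_u^n v=v$ with $n\neq0$, replace $n$ by $-n$ if necessary so that $n>0$. The estimate then gives $f(v)\le f(v)-n\delta$, a contradiction.

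For proper discontinuity, let $K\subset M^{\times,\circ}$ be compact. Since $f$ is continuous, $f(K)\subset[a,b]$ for some $a\le b$. Suppose $n>(b-a)/\delta$ and $v\in K$. Then $f(g_u^n v)\le b-n\delta<a$, so $g_u^nK\cap K=\emptyset$. For $n<-(b-a)/\delta$, apply the same argument to $g_u^{-n}$ and the set $g_u^nK\cap K$, which again gives an empty intersection. Hence only finitely many $n$ satisfy $g_u^nK\cap K\neq\emptyset$, which is proper discontinuity.

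The point $v\in K$ must lie over $S^\circ$, where $\phi$ is invertible. This is automatic, since $K$ is compact in $M^{\times,\circ}$; no control of $\phi$ near $S_\infty$ is needed beyond the upper bound $C$.
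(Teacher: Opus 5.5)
Your proof is correct and follows the same route as the paper. Both bound the pointwise (operator) norm of $\phi$ over the compact surface $S$ with respect to a smooth Hermitian metric on $M$, choose $c$ with $cC<1$, and observe that $u\phi$ is then uniformly contracting for the Hermitian norm. Your function $f=\log|v|_h$ and the explicit compact-set argument simply spell out the paper's one-line assertion that such a contracting action is free and properly discontinuous.
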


\begin{proof}
Consider the map
$
\phi\colon t_{6P}^*M\to  M
$ which is an isomorphism away from
$S_\infty$.
Choose a smooth Hermitian metric on $M$. With respect to this metric,
the operator norm
$
x\mapsto \|\phi_x\|
$
is a continuous function on the compact surface $S$, equal to zero 
exactly along $S_\infty$. It is therefore bounded. Set
$
C=\max_{x\in S}\|\phi_x\|
$
and choose $c>0$ such that $cC<1$.

Fix $u\in\bC^*$ with $|u|<c$. Then,
$u\phi$ also defines a self-map of $M$
covering $(t_{6P})^{-1}$ as in Definition~\ref{z-action}. 
Since the operator norm of $u\phi_x$ is strictly
less than $1$, the action is contracting 
with respect to the Hermitian norm function $M^{\times,\circ} \to \bR_{>0}.$
The $\bZ$-action on this open set is therefore free and properly discontinuous.
\end{proof}

\begin{definition}\label{linearization-def}
Let $\phi_0$ be a linearization as in Corollary \ref{lin-exist} and let
$\phi = u\phi_0$ with $|u|<c$, $u\in \bC^*$ as in Proposition \ref{discontinuous}.
We define a smooth threefold as the quotient 
$$X^\circ(u)\coloneqq M^{\times,\circ}/\bZ$$
of the $\bZ$-action of Definition \ref{z-action} associated 
to the linearization $\phi$.
\end{definition}

\begin{proposition} $X^\circ(u)\to \bA^1$ is a fibration with
smooth complex $2$-torus fibers over $\bA^1\setminus \{0,1\}$.
 \end{proposition}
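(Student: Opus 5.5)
The plan is to check three things in turn: that the projection $M^{\times,\circ}\to S^\circ\to\bA^1$ descends to the quotient, that it is a proper holomorphic submersion over $\bA^1\setminus\{0,1\}$, and that each fiber there is a complex torus.

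\emph{The map descends.} Translation $t_{6P}$ preserves the fibers of $\pi$, since it acts fiberwise. The $\bZ$-action of Definition \ref{z-action} lifts $t_{6P}^{-1}$, so the composite $p\colon M^{\times,\circ}\to S^\circ\to \bA^1$ is $\bZ$-invariant. It therefore descends to a holomorphic map $X^\circ(u)\to\bA^1$. By Proposition \ref{discontinuous}, $X^\circ(u)$ is a complex manifold.

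\emph{Identifying a fiber.} Fix $t\in\bA^1\setminus\{0,1\}$, so that $S_t$ is a smooth elliptic curve. The fiber of $p$ over $t$ is $M^\times_t$, the principal $\bC^*$-bundle over $S_t$ attached to $M_t\in{\rm Pic}^0(S_t)$. The generator of $\bZ$ acts on it by the biholomorphism lifting translation by $-6P_t$. This translation is given by $u\phi_t$, which is an isomorphism because $t\neq\infty$. Hence the fiber of $X^\circ(u)$ over $t$ is $M^\times_t/\bZ$.

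\emph{The fiber is a torus.} This is the key step. Since $M_t$ has degree $0$, it is a flat unitary bundle, so I write $M^\times_t=(\bC\times\bC^*)/\Lambda_t$. Here $\Lambda_t=\bZ+\bZ\tau$ acts by $(z,w)\mapsto(z+\lambda,\chi(\lambda)w)$ for a unitary character $\chi$, with the convention that $M^\times_t$ means the total space minus the zero section. Lifting translation by $-6P_t$ to a bundle isomorphism forces, after rescaling, the form $(z,w)\mapsto(z-a,\,\kappa w)$ with $\kappa\in\bC^*$. I will take $a$ to be the Abel--Jacobi image of $6P_t$. The constant $\kappa$ is determined by $u\phi_t$.

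Passing to $\bC\times\bC\to\bC\times\bC^*$ via $w=e^{2\pi i v}$, the group generated by all these maps becomes a group of translations of $\bC^2$. It is generated by:
\begin{itemize}
\item the deck transformation $(0,1)$;
\item the two lattice lifts $(1,\alpha)$ and $(\tau,\beta)$, where $e^{2\pi i\alpha}$ and $e^{2\pi i\beta}$ are the values of $\chi$;
\item the lift $(-a,\gamma)$ of the generator of $\bZ$, where $e^{2\pi i\gamma}=\kappa$.
\end{itemize}
The fiber is then $\bC^2/\Gamma$, with $\Gamma$ the rank-$4$ subgroup spanned by these four translations. It is a compact complex torus precisely when $\Gamma$ is a lattice, that is, when the four vectors are $\bR$-linearly independent.

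Linear independence is the main obstacle, and I plan to argue it via properness. By Proposition \ref{discontinuous}, the action is properly discontinuous and contracts the Hermitian norm: it multiplies $|w|$ by a factor of norm less than $1$. This gives $|\kappa|\neq 1$ in the flat unitary trivialization. Because $\chi$ is unitary, the imaginary parts of $\alpha$ and $\beta$ vanish, while $\operatorname{Im}\gamma\ne0$. Projecting $\bC^2$ to the real coordinates $(\operatorname{Re}z,\operatorname{Im}z,\operatorname{Im}v)$, the images of $(1,\alpha)$, $(\tau,\beta)$ and $(-a,\gamma)$ are independent: $1$ and $\tau$ span the $z$-plane, and only $(-a,\gamma)$ has $\operatorname{Im}v\neq0$. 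Adding $(0,1)$ then contributes the remaining $\operatorname{Re}v$ direction, so all four vectors are independent.

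Equivalently, the fiber is compact because it is the quotient of a $\bC^*$-bundle over a compact curve by a contraction. A fundamental domain is an annulus bundle $\{r\le\|w\|\le r|u|C\}$ over $S_t$, in the notation of Proposition \ref{discontinuous}. The same fundamental-domain argument shows that $X^\circ(u)\to\bA^1$ is proper over $\bA^1\setminus\{0,1\}$. Proper holomorphic submersions are locally trivial by Ehresmann, which gives the fibration statement. Submersivity is inherited from the submersion $M^{\times,\circ}\to S^\circ\to\bA^1\setminus\{0,1\}$.
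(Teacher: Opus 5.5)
Your proof is correct and follows the paper's route. You uniformize the semiabelian fiber $M^\times_t$ by $\bC^2$ modulo a rank-$3$ group of translations, adjoin the translation induced by $u\phi$, and use Proposition \ref{discontinuous} to see that the resulting rank-$4$ group is a lattice. Your independence check via unitarity of $\chi$ and $|\kappa|\neq 1$ makes the paper's ``discrete of rank $4$'' step explicit. Your descent, properness and Ehresmann paragraph supplies the fibration claim that the paper leaves implicit.

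One small fix is needed in the properness argument. The annulus $\{r\le\|w\|\le r|u|C\}$ is empty as written, since $|u|C<1$. Even reversed, for a non-flat metric it need not meet every orbit, because without a lower bound on the contraction an orbit can jump over it. For properness over a compact $K\subset\bA^1\setminus\{0,1\}$, use $\{r\,|u|\min_{\pi^{-1}(K)}\|\phi_x\|\le\|w\|\le r\}$ over $\pi^{-1}(K)$. This set is compact and surjects onto the quotient. Alternatively, on a single fiber you can work in the flat metric, where $\{|\kappa|r\le|w|\le r\}$ is an honest fundamental domain.
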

 
 \begin{proof} Over $t\in \bA^1\setminus \{0,1\}$ we have an exact sequence
 \begin{align}\label{exact} 1\to \bC^*\to M^\times_t \to S_t\to 0,\end{align}
 i.e.~the fiber of $M^\times$ over $t$ is a semiabelian variety.
 So the universal cover of $M_t^\times$
 is $\bC^2$, and its fundamental group is isomorphic to $\bZ^3$.
 
 Quotienting by the additional
 $\bZ$-action associated to the linearization $\phi$ gives a complex $2$-torus,
 because the action is properly discontinuous, see Proposition \ref{discontinuous},
 and the additional $\bZ$-factor increases the rank of the subgroup of $\bC^2$ from $3$ to $4$.
 \end{proof}
 
\begin{proposition}\label{mumford-1}
In a punctured neighborhood of $\infty\in \bP^1$ with
coordinate $q$,
$X^\circ(u)_q$ is biholomorphic to $$(\bC^*)^2/\langle (c_1q, c_2), 
(c_3, c_4q) \rangle$$ for holomorphic units $c_i\in \bC^*+O(q)$ on $\Delta$.
\end{proposition}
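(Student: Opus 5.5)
Near $\infty$ the fibers $S_q$ are the nodal degenerations of an $I_1$ fiber, so I would use Tate uniformization of the elliptic fibration over a small punctured disc $\Delta^*$ around $\infty$ and then pull back the whole construction to the universal cover of each fiber.

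\emph{Step 1: Tate uniformization of $S$.} Since $S_\infty$ is of type $I_1$, after shrinking $\Delta$ the restriction $S|_{\Delta^*}$ is the Tate curve. That is, $S_q\simeq \bC^*/\langle \tilde q\rangle$ with $\tilde q = q\cdot(\text{unit})$, and we may reparametrize so that $\tilde q=q$. The sections $O$ and $P$ lift to holomorphic functions $1$ and $p(q)\in\bC^*$ on $\Delta^*$. Because $P$ extends over the $I_1$ fiber, it meets the smooth locus there, which is $\bC^*$. Hence after choosing the lift appropriately, $p(q)$ extends to a unit on $\Delta$. Translation by $6P$ then becomes multiplication by $p^6$ on $\bC^*_w$.

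\emph{Step 2: the $\bG_m$-bundle.} Pull $M^\times|_{S_q}$ back to $\bC^*_w$. There it is trivial, and a degree-zero line bundle $\cO(P-O)$ on $\bC^*/\langle q\rangle$ is given by the factor of automorphy $w\mapsto w q$, $v\mapsto p\cdot v$ up to units. Equivalently, $M_q\simeq \bC^*\times\bC^*/\langle (q, a(q))\rangle$, where $a(q)=p(q)\cdot(\text{unit})$. Here I would use the standard description of $\mathrm{Pic}^0$ of the Tate curve as $\bC^*/q^{\bZ}$, with the line bundle $\cO(x-e)$ corresponding to $x$. Doing this holomorphically in $q$ over $\Delta^*$ requires a trivialization of $M$ along the family of covers. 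I would get it from $\cO(P-O)$ explicitly via theta functions $\theta(w/p)/\theta(w)$, with $\theta(w)=\prod(1-q^nw)(1-q^{n+1}/w)$. This gives $M^\times|_{\Delta^*}=(\bC^*)^2/\langle (q,a)\rangle$, with $a\in\bC^*+O(q)$ up to a power of $q$.

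\emph{Step 3: the linearization.} The $\bZ$-generator acts by $(w,v)\mapsto (p^{-6}w,\ u\,\phi(w)\,v)$. On the cover, the linearization $\phi$ becomes a nowhere-vanishing holomorphic function of $(w,q)$ compatible with the automorphy factors. Such a function equals $b(q)\,w^k$ times a function invariant under $w\mapsto qw$, and the invariance forces that function to be constant in $w$. Since $\phi$ is a map between line bundles with the same automorphy factor up to conjugation, I expect $k=0$ after adjusting by the first generator. So the second generator is $(p^{-6}, u\,b(q))$.

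The key input is Corollary \ref{lin-exist}: $\phi$ vanishes to order exactly one along $S_\infty$. This should force $b(q)=q\cdot(\text{unit})$. This step, pinning down the order of vanishing in the uniformized coordinates and checking that no stray powers of $q$ enter the first generator, is the main obstacle. I would handle it by comparing the norm computation of Proposition \ref{discontinuous} (the norm of $\phi$ is $\sim|q|$) with the fact that the Hermitian norm on the cover is, up to bounded factors, $|v|$ times a function of $|w|$.

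\emph{Step 4: conclusion.} The lattice is generated by $(q, a)$ and $(p^{-6}, ub)$. This is exactly the stated form, with $c_1=1$, $c_2=a$, $c_3=p^{-6}$ and $c_4q=ub$, all $c_i$ being units on $\Delta$. If $a$ carries a power of $q$, I would perform a change of basis of the lattice to absorb it.
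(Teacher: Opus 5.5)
Your proposal is correct and takes the same route as the paper. You Tate-uniformize $S$ near $\infty$ and trivialize $M^\times$ on the partial cover, so that the Tate deck transformation has base factor of $q$-order one and a unit fiber factor (because $M$ has degree zero). You then read off the second generator from $\phi$: its base factor is a unit because $6P$ meets the smooth locus of the $I_1$ fiber, and its fiber factor has $q$-order one because $\phi$ vanishes simply along $S_\infty$.

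At that last step you justify more than the paper, which only asserts it. Your norm comparison goes through because the theta-function trivialization extends holomorphically and without zeros across the smooth locus of $S_\infty$, so $|b(q)|$ is bounded above and below by constant multiples of $|q|$. In Step 3 no $w^k$ can actually appear: commuting with $(w,v)\mapsto(qw,av)$ forces $\phi(qw)=\phi(w)$, so $\phi$ is constant on the compact curve $S_q$.
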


\begin{proof}
Since $S_\infty$ is an $I_1$ Kodaira fiber, the restriction of $S$ to
$\Delta^*$ admits a Tate uniformization
$
S_q\simeq\bC^*/q^\bZ.
$
After pulling $M^\times$ back to the partial cover
$\bC^*\to S_q$ and choosing a trivialization, its total space is
$(\bC^*)^2$. The deck transformation defining the Tate curve acts by
$$
(z,w)\mapsto
(c_1q z,c_2w), \textrm{ for }(z,w)\in (\bC^*)^2.
$$
Here $c_1$ and $c_2$ are holomorphic units. The first scaling factor has
order one in $q$ because it is the Tate period, while the second has
order zero because $M$ has degree zero and extends across the $I_1$
fiber.

The linearization $\phi$ gives a second transformation of
$(\bC^*)^2$. It covers translation by $-6P$ on the Tate curve. Since
$6P$ specializes to a point of the smooth locus of the $I_1$ fiber,
its multiplicative Tate coordinate is a holomorphic unit. Thus the
first component of this transformation has the form $z\mapsto c_3z$.
Moreover, $\phi$ vanishes to first order along $S_\infty$ so its
action in the fiber direction of $M^\times$ has the form
$w\mapsto c_4 q w$, where $c_4$ is a holomorphic unit. Hence the second
transformation is
$$
(z,w)\mapsto
(c_3z,c_4 qw), \textrm{ for }(z,w)\in (\bC^*)^2.
$$

Quotienting $(\bC^*)^2$ by these two commuting transformations gives
the asserted description of $X^\circ(u)_q$ for $q\in \Delta^*$.\end{proof}

\begin{proposition}\label{mum-cor}
A $\bZ^2$-periodic tiling 
of $\bR^2$ by lattice simplices of area $\tfrac{1}{2}$ determines
a Kulikov ($K$-trivial, semistable)
filling $$X^\circ(u)\hookrightarrow X(u)$$
over $\infty\in \bP^1$.
For example, one may take the tiling by triangles in Figure \ref{kulikov-fig}.
In this case, the fiber
$X(u)_\infty$ is the result of gluing the opposite sides of a hexagon of $(-1)$-curves
on a del Pezzo surface $dP_6$.\end{proposition}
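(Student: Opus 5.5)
We use Mumford's toroidal degeneration construction, starting from the local description of Proposition \ref{mumford-1}. After a holomorphic change of coordinates $(z,w)\mapsto (a(q)z,b(q)w)$ with units $a,b$, and possibly a reparametrization of $q$, we may assume the period lattice is generated by $(q\lambda_1,\lambda_2)$ and $(\lambda_3,q\lambda_4)$, with $\lambda_i$ holomorphic units. Consider the period map $\Lambda=\bZ^2\to \bZ^2$ given by the $q$-valuations of the generators. It sends $e_1\mapsto (1,0)$ and $e_2\mapsto(0,1)$, so the valuation form $B(\gamma,\gamma')$ is the identity matrix, which is symmetric and positive definite. This is the polarization input needed for Mumford's construction.

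Next we build a relatively complete model. Take the quadratic function $\varphi(m)=\tfrac12 m^TBm$ on $\bZ^2$, or any convex piecewise-affine function bending exactly along the chosen tiling. Given a $\bZ^2$-periodic unimodular triangulation $\cT$ of $\bR^2$, let $\varphi_\cT$ be the piecewise-affine interpolation of $\varphi$ on the vertices of $\cT$. We require $\cT$ to be regular, i.e.\ the domains of linearity of some strictly convex $\Lambda$-quasi-periodic function $\psi$ satisfying $\psi(m+\gamma)=\psi(m)+\ell_\gamma(m)$ with $\ell_\gamma$ affine. For Figure \ref{kulikov-fig} (the standard tiling by the lines $x,y,x-y\in\bZ$) the Delaunay function of the form $x^2-xy+y^2$ works, and we would replace $B$ by an equivalent form using the freedom to modify $q$-valuations by coboundaries. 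Let $\widetilde{\cX}$ be the toric variety over $\Delta$ attached to the cone over the graph of $\psi$ in $\bR^2\times\bR_{\ge0}$. It is locally of finite type, and its fan is the cone over the triangles of $\cT$ placed at height $1$. Each triangle has area $\tfrac12$, so each cone is unimodular. Hence $\widetilde{\cX}$ is smooth, and its central fiber is reduced normal crossings, with one component for each vertex of $\cT$.

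The $\bZ^2$-action on $(\bC^*)^2\times\Delta^*$ extends to $\widetilde{\cX}$, because it acts on the fan by the shears $m\mapsto m+\gamma$ on the graph of $\psi$. The action is free and properly discontinuous near $q=0$ by the standard Mumford argument: the action is cocompact on the central fiber, and strict convexity gives contraction away from it. So $X(u)\coloneqq X^\circ(u)\cup \widetilde{\cX}/\bZ^2$ is a complex manifold, and the fibration extends holomorphically over $\infty$.

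For the Kulikov condition, the central fiber is a reduced SNC divisor $X(u)_\infty=\sum V_i$. Since $K_{\widetilde{\cX}}$ plus the central fiber is trivial on a toric degeneration ($K=-\sum D$ over all toric boundary divisors, and here the only toric divisors are the fiber components), the relative canonical bundle is trivial. This also gives $K$-triviality of $X(u)$ near the fiber. Since $\bZ^2$ acts transitively on the vertices of the standard tiling, there is one component $V$. It is the toric surface whose fan is the star of a vertex. That star consists of six unimodular cones, which is the fan of $dP_6$, and its boundary is a hexagon of $(-1)$-curves. The quotient by $\bZ^2$ identifies opposite edges of the hexagon, since the edges of $\cT$ come in three translation classes.

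The main obstacle is the compatibility step. We must check that the transition functions of the lattice in Proposition \ref{mumford-1} match the quasi-periodicity of the chosen $\psi$, so that the $\bZ^2$-action genuinely preserves the fan. In particular, the nontrivial units $c_i$ must not obstruct this. We would handle this by noting that units only affect the torus action and not the fan, so only the valuation matrix matters. Any positive-definite integral valuation matrix admits a quasi-periodic $\psi$ whose bending locus is a prescribed regular triangulation, since $\varphi$ can be perturbed within its convex class.
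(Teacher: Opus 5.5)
Your skeleton matches the paper's proof. The fan is the cone over $\cT$ at height one. A period $\gamma$ acts by the shear $(v,h)\mapsto (v+h\,\mathrm{val}(\gamma),h)$, i.e.\ by translation by $\mathrm{val}(\gamma)\in\bZ^2$ on the height-one slice, and the units $c_i$ act fiberwise through the torus without affecting the fan. Unimodularity gives a smooth total space with reduced normal crossings central fiber and trivial $K$, and the single vertex orbit gives one $dP_6$ with opposite sides of its hexagon glued. The paper does exactly this, citing the complex-analytic Mumford construction, with no convex function or polarization anywhere.

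\textbf{The step that fails} is the layer you import from Mumford's algebraic construction. Suppose, as you intend, that $\psi$ must have quadratic part equal to the valuation form $B=I$. Then no such $\psi$ bends along the tiling of the figure. A quasi-periodic function affine on the triangles of the tiling by lines $x,y,x-y\in\bZ$ has constant bends $a,b,c$ along the three edge classes, so its quadratic part is $\begin{psmallmatrix} a+c&-c\\-c&b+c\end{psmallmatrix}$. Requiring this to be $B=I$ forces $c=0$, and then there is no bending along the diagonals. So your claim that every positive definite $B$ admits a quasi-periodic $\psi$ bending exactly along a prescribed regular triangulation is false. Nor can $B$ be altered ``by coboundaries'': it is fixed by the $q$-orders of the periods, and the square form is not equivalent to $x^2-xy+y^2$. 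Switching to another polarization such as $\begin{psmallmatrix}2&-1\\-1&2\end{psmallmatrix}$ does not rescue the algebraic route either. Mumford's algebraic theorem needs the full period pairing, units included, to be symmetric, and that fails here, consistent with the paper's later remark that the fibers are not abelian surfaces.

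Fortunately none of this is needed. Invariance of the fan is just $\bZ^2$-periodicity of $\cT$, so your ``main obstacle'' is vacuous, as your own remark about the units already indicates. Freeness, proper discontinuity and compactness of the quotient come from $\mathrm{val}$ mapping the periods isomorphically onto $\bZ^2$, not from strict convexity. The action on the height-one slice is free and cocompact, and a compact fundamental domain lies in finitely many affine charts $\bC^3$ over a closed disk. Drop the $\psi$ paragraph and the appeal to convexity, and the argument is complete.

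Regularity was harmless anyway. A $\bZ^2$-periodic unimodular tiling has all of $\bZ^2$ as vertices. By the Euler characteristic of the torus it then has three edge orbits and two triangle orbits, so it is $\GL_2(\bZ)$-equivalent to the standard one.
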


\begin{proof} This follows from a simple Mumford fan construction
\cite{Mum72}. By Proposition \ref{mumford-1}, the
fiber over the punctured disk is $(\bC^*)^2$ modulo the subgroup
generated by the rows of the matrix
$$
c\cdot q^{\begin{psmallmatrix}1&0\\0&1\end{psmallmatrix}},
\qquad
c={\small \begin{pmatrix}c_1&c_2\\c_3&c_4\end{pmatrix}} ,
$$
where $\cdot$ denotes entrywise multiplication.
Thus, a $\bZ(1,0)\oplus \bZ(0,1)\simeq \bZ^2$-periodic
tiling of $\bR^2$ provides a Mumford construction filling the family over $q=0$.
Take the triangulation
of the unit square into two triangles. Extending
$\bZ^2$-periodically gives such a tiling
 $\cT$. Put $\cT$ at height $1$ in $\bR^3$, i.e.~$\bR^2\times \{1\}\subset \bR^2\times \bR\simeq \bR^3$,
 and take the cone ${\rm Cone}(\cT)$. The result is a $\bZ^2$-periodic fan in $\bR^3$
 with standard affine cones. 
 
 This fan defines a one-parameter Mumford construction of degenerating
 complex $2$-tori, as in  \cite[Constr.~3.3 and Rem.~3.5]{EGFS25}:~we take the $\bZ^2$-quotient 
 of the toric variety of ${\rm Cone}(\cT)$, for which the fiber $(\bC^*)^2$ over $q\in \Delta^*$ of the natural map to $\bA^1$ is
 quotiented by the $\bZ^2$-action of Proposition \ref{mumford-1}. 
 
Before quotienting, the central fiber is an infinite periodic quilt of smooth toric surfaces, whose components
are isomorphic to $dP_6$ (see Figure \ref{kulikov-fig}). The passage to the $\bZ^2$-quotient has the effect
 of taking a single such $dP_6$ and gluing the opposite sides
 of its anticanonical hexagon, as described in the proposition.\end{proof}
 
 \begin{figure}
\includegraphics[width=\textwidth]{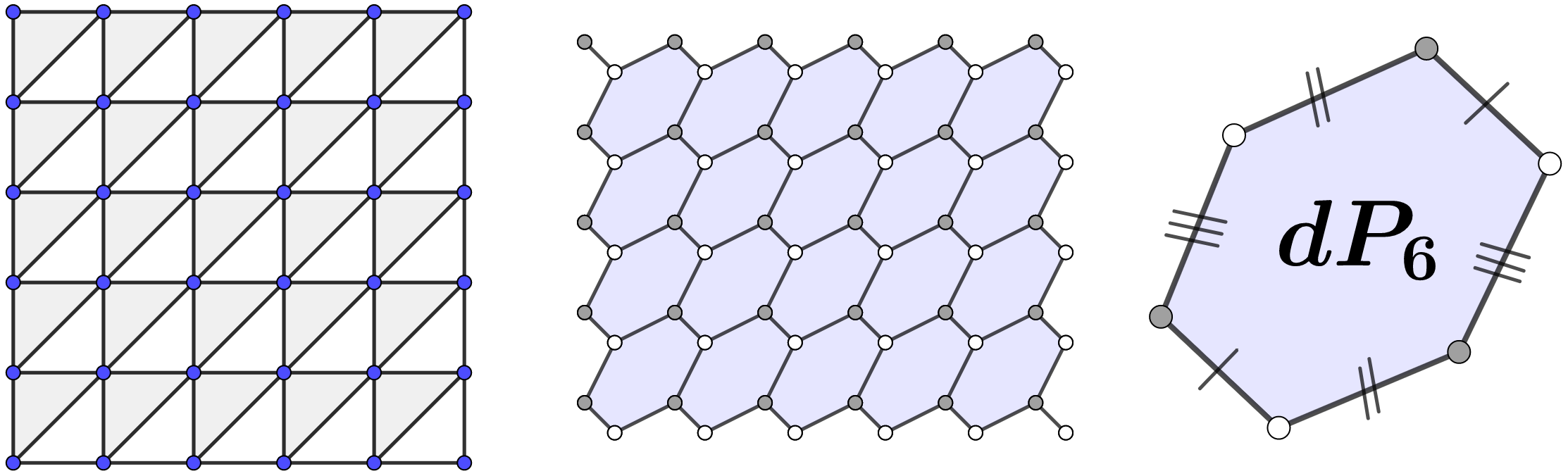}
\caption{Left:~$\bZ^2$-periodic tiling $\cT$ of the plane $\bR^2$. 
Middle:~$\bZ^2$-periodic infinite quilt of smooth toric surfaces.
Right:~Its $\bZ^2$-quotient and the central fiber of Mumford construction, a glued
$dP_6$. Two $0$-dimensional
torus orbits in white and grey, three $1$-dimensional torus orbits in black; the $2$-dimensional
torus orbit in blue.}
\label{kulikov-fig}
\end{figure}

\begin{proposition}\label{section-of-J}
The fibration $X(u)\to\bP^1$ admits a holomorphic section.  Over
$\bP^1\setminus\{0,1,\infty\}$, this section may be taken as the origin of
the smooth complex $2$-torus fibers.
\end{proposition}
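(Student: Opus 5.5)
The section should come from a section of $M^\times$ over the zero section $O\subset S$, made $\bZ$-invariant. The plan is to build it first over $\bA^1$, then check it extends across the Mumford filling at $\infty$.

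\emph{Step 1 (an invariant section over $\bA^1$).} Since $P\cdot O=0$ and $O^2=-1$, the restriction is
$$M|_O=\cO_S(P-O)|_O\simeq \cO_{\bP^1}(1).$$
The zero section is fixed neither by $t_{6P}$ nor by its inverse; its images are the sections $\pm 6P$. So a single section $\sigma$ of $M^\times$ over $O$ does not by itself give a $\bZ$-invariant object. Its orbit, however, is the $\bZ$-orbit of a nowhere-zero section over $O^\circ=O\cap S^\circ$, and it descends to $X^\circ(u)$ as a copy of $O^\circ\simeq\bA^1$ mapping isomorphically to $\bA^1$. The $\bZ$-translates of the curve lie over the disjoint sections $6kP$; they are disjoint because $6kP\neq 0$ gives disjoint sections of $S$ over the locus where all fibers are smooth, or at least they are distinct curves. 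Free proper discontinuity (Proposition \ref{discontinuous}) then shows the image in the quotient is an embedded section over $\bA^1$.

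The real constraint is therefore only that $\sigma$ be nonvanishing on $O^\circ\simeq\bA^1$. Since $M|_O\simeq\cO(1)$, take $\sigma$ to be a section of $\cO(1)$ vanishing exactly at $\infty$, and use $\sigma$ over $\bA^1$. In a fiber over $t\notin\{0,1\}$, the point $\sigma(t)\in M_t^\times$ maps to the identity of the quotient torus. Using the semiabelian structure \eqref{exact} and choosing the identity of $\bC^*\subset M^\times_t$ above $O_t$ via $\sigma$, this is the origin of the $2$-torus.

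\emph{Step 2 (extension over $\infty$).} This is the main obstacle. In the Tate-uniformized coordinates of Proposition \ref{mumford-1}, the section is $q\mapsto(z,w)=(1,\sigma(q))$ where $\sigma$ has a simple zero at $q=0$. We may rescale using the $\bZ^2$-action. Applying the generator $(c_3,c_4q)^{-1}$ once sends $w$ to $\sigma(q)/(c_4q)$, which is a holomorphic unit. The simple zero of $\sigma$ is exactly matched by the simple zero of $\phi$ along $S_\infty$. This matching is the same degree computation $n=1$ as in the Lemma, which suggests choosing $\sigma$ as the restriction of $\phi$ to $O$ composed with a trivialization. After this, the section in the chart is $q\mapsto(1,\text{unit})$, which lies in the open torus orbit of the toric variety of ${\rm Cone}(\cT)$. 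It therefore extends holomorphically across $q=0$, meeting $X(u)_\infty$ in the open $(\bC^*)^2$-orbit of the glued $dP_6$, that is, at a smooth point of the central fiber.

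The delicate point is that the modification is not global. Replacing $\sigma$ by $\sigma/(c_4 q)$ only makes sense locally over the disk, while over $\bA^1$ we use $\sigma$ itself. The two agree in $X^\circ(u)$ because they differ by the deck transformation generating $\bZ$. So the descended curve near $\infty$ is the closure of the unit section in the Mumford model, and the glued section is holomorphic on all of $\bP^1$.
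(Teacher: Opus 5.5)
Your proof is correct and is essentially the paper's argument: a section of $M|_O\simeq\cO_{\bP^1}(1)$ with a simple zero at $\infty$ descends to a section of $X^\circ(u)\to\bA^1$, and near $\infty$ translating by the inverse linearization period $(c_3^{-1},c_4^{-1}q^{-1})$ gives a section with nonzero limit at $q=0$, which therefore extends across the Mumford filling. Neither of the following affects the argument: the disjointness discussion in Step 1 is unneeded, since a holomorphic map that composes with the projection to the identity is already a section, and its appeal to disjointness of the sections $6kP$ and $O$ of $S$ is not justified; and after translating, the $z$-coordinate is $c_3^{-1}$ rather than $1$.
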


\begin{proof}
The restriction of $M$ to the zero section $O$ admits a section $e$
which is non-vanishing over $\bA^1$ and has a first order zero at $\infty$,
because $M\cdot O=1$. Hence 
$e$ defines a section of $M\vert_O\to O$, valued
in $M^\times\vert_O$ away from $\infty$. Its image under
the quotient map of Definition \ref{linearization-def} defines
a section of $X^\circ(u) =M^{\times,\circ}/\bZ \to \bA^1$.

Near $\infty$, the condition that $e$ vanishes to first order
implies that it defines a local section of the
uniformization of Proposition~\ref{mumford-1}. Indeed, we may first
translate $e$ by the period $(c_3^{-1}, c_4^{-1}q^{-1})$. 
This translated section has nonzero limit at $q=0$, and so extends 
as a section of the Mumford construction. It coincides with 
$e$ in the quotient $X^\circ(u)$.
\end{proof}

We have thus completed the summary of the ``initial construction'': We have produced
a smooth, compact complex threefold $X(u)$ for all sufficiently small $u\in \bC^*$,
which is a complex $2$-torus fibration, with section, and three singular fibers.

\section{Logarithmic modifications}\label{sec-log-mod}

We now fill the family $X(u)\to \bP^1$ at $0$ and $1$ in a new way.
This logarithmic transform changes the fiber without
changing the torus family over the punctured neighborhoods of $0$ and $1$, see 
e.g.~\cite[Ch.~V, \S 13]{BHPV04}, \cite[\S 5.3]{EFGMS25}.

To this end, we first describe the family $X(u)$ over a neighborhood of $0$ and $1$.
Observe that the $\SL_2(\bZ)$ monodromies of $S\to \bP^1$ about $0$ and $1$ have orders
$3$ and $4$, respectively.

\begin{proposition}\label{reduction} 
Let $\Delta_0\ni 0$, $\Delta_1\ni 1$ be disks. Consider the order $3$, resp.~$4$
 base change $\Delta\to \Delta_0$, $s\mapsto s^3=t$ or $\Delta\to \Delta_1$, $s\mapsto s^4=t-1$.
\begin{enumerate}
\item For $i=0,1$, $S\times_{\Delta_i}\Delta$ admits a smooth elliptic reduction
$S'\to\Delta$, whose central fiber is isomorphic to $E_\triangle$ for $i=0$
and to $E_\square$ for $i=1$. \smallskip
\item The linearization, viewed as a biholomorphism $\phi\colon M^{\times,\circ}\to M^{\times, \circ}$,
lifts to a biholomorphism $\phi'$ of $M'\coloneqq \cO_{S'}(P'-O')$ where $P'$ and $O'$ are the transforms
of $P$ and $O$.

Furthermore, the action of $\phi'$ on $E_\triangle$ or $E_\square$ is trivial,
and $\phi'$ acts on the $\bG_m$-torsor $M'^\times$ (which over $E_\triangle$ or $E_\square$
corresponds to either a $3$-torsion or a $2$-torsion line bundle $L^\times$, 
respectively) by a scalar in $\bC^*$.
\end{enumerate}
\end{proposition}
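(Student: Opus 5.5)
Part (1) is the standard potential good reduction statement for Kodaira fibers of types $IV^*$ and $III$. I would argue with the Weierstrass model. Near $t=0$, substituting $t=s^3$ into $y^2=x^3-3t^3(t-1)x+2t^4(t-1)^2$ gives coefficients of orders $9$ and $12$ in $s$. The rescaling $x=s^4x'$, $y=s^6y'$ reduces these orders by $8$ and $12$, leaving $y'^2=x'^3+3s(1-s^3)x'+2(s^3-1)^2$. Its central fiber is $y^2=x^3+2$, which has $j=0$, so it is $E_\triangle$. Near $t=1$, put $t-1=s^4$. The $x$-coefficient then has order $4$ and the constant term order $8$; the rescaling $x=s^2x'$, $y=s^3y'$ leaves central fiber $y^2=x^3-3x$ with $j=1728$, which is $E_\square$. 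In both cases the resulting minimal model is smooth, so $S'\to\Delta$ is the smooth elliptic reduction, birational to $S\times_{\Delta_i}\Delta$. The residual $\mu_3$ (resp.\ $\mu_4$) action acts on the central fiber through its automorphism of order $3$ (resp.\ $4$), which recovers the monodromy orders noted before the proposition.

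For part (2), the sections $O$ and $P$ pull back to sections of $S\times_{\Delta_i}\Delta$ and extend uniquely to sections $O',P'$ of the smooth proper family $S'$, since rational sections extend. Translation $t_{6P}$ on the generic fiber extends to the automorphism $t_{6P'}$ of $S'$, by the Néron property of smooth models. The main content is that $6P'$ specializes to the origin of $E_\triangle$ (resp.\ $E_\square$). To see this, note the specialization map from sections to the central fiber intertwines the $\mu_n$ Galois action with the automorphism $\zeta$ of order $n$ of the central fiber. Since $P$ is defined over the base, $P'(0)$ is fixed by $\zeta$. The fixed points of $\zeta$ on $E_\triangle$ form a group of order $3$, and on $E_\square$ a group $\ker(1-i)$ of order $2$. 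Hence $3P'(0)=0$, resp.\ $2P'(0)=0$, and therefore $6P'(0)=0$. So $t_{6P'}$ restricts to the identity on the central fiber.

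Next, $M'=\cO_{S'}(P'-O')$ has central restriction $\cO(P'(0)-O'(0))$, which is a torsion line bundle $L$ of order dividing $3$, resp.\ $2$. It is nontrivial because $P$ meets a non-identity component of the $IV^*$ (resp.\ $III$) fiber; I would check this against the component-group identification for the Néron model, namely that the $\mu_n$-fixed points of $E_0$ modulo the identity component correspond to the component group. On the generic part, $M'$ agrees with the pullback of $M$ and the lifted $t_{6P}$ with $t_{6P'}$, so $\phi$ pulls back to an isomorphism $t_{6P'}^*M'\to M'$ over $\Delta^*$. Two extensions of the same line bundle across the central fiber of the smooth surface $S'$ differ by a multiple of the central fiber. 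The divisor of $\phi'$ is therefore $kE_0$, and after rescaling by $s^{-k}$ (which does not change the action on the punctured family) $\phi'$ becomes an isomorphism over all of $S'$. On $E_0$, $\phi'$ covers the identity, so it is a bundle automorphism of $L$, hence a scalar in $\bC^*$ because $E_0$ is compact and connected.

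The step I expect to be the main obstacle is matching $\phi'$ with the original $\phi$ without the $s^{-k}$ renormalisation, i.e.\ showing $k=0$. I would try to obtain this by computing divisors: $\phi$ is an isomorphism over $\bA^1$, and the pullback of $M$ differs from $M'$ only by components contracted in passing to $S'$. It may be simplest to state the lift up to this harmless twist.
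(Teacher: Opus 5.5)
\textbf{Verdict: genuine gap in part (2), at exactly the step you flag.}

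Your part (1) is correct. The rescalings and the central fibers $y^2=x^3+2$ and $y^2=x^3-3x$ check out. Your fixed-point argument for $6P'(0)=0$ is cleaner than the paper's appeal to an ``explicit computation''. Nontriviality of $L$ can even be read off directly: $P=(0,\sqrt2\,t^2(t-1))$ is a generator, and it specializes to $(0,-\sqrt2)$ and $(0,0)$ respectively.

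The gap is this. That $\phi$ lifts to a \emph{biholomorphism} of $M'^\times$ is precisely the statement $k=0$, and your fallback is not harmless.
\begin{itemize}
\item Multiplying by $s^{-k}$ is not a gauge change. A unit on $S'|_{\Delta^*}$ is constant on the compact fibers, and $t_{6P'}$ preserves fibers, so conjugating $\phi$ by it changes nothing.
\item Hence $s^{-k}\phi$ is a genuinely different automorphism of $M^\times$ over $\Delta^*$, not a lift of $\phi$.
\item Its quotient shifts the linearization period by $\mp\tfrac{k\log s}{2\pi i}\delta$, with $\delta$ the circle class. So the local monodromies of the two quotient families differ by the unipotent factor $\lambda\mapsto\lambda\pm k\,\psi(\lambda)\,\delta$.
\item If $k\neq0$, the base change of $X(u)$ itself would therefore have infinite-order local monodromy and no smooth filling.
\end{itemize}
So $k=0$ is the substance of (2), not a normalization. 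It is what Corollary~\ref{fill2} and the finite orders of $T_0,T_1$ in Proposition~\ref{conj-rat} rest on.

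The paper closes this along the route in your last paragraph. On a common resolution $S''$ of $S\times_{\Delta_i}\Delta$ and $S'$, $\phi$ pulls back to an isomorphism, since it is one over $\bA^1$. The pullbacks of $M$ and $M'$ agree away from the curves contracted by $S''\to S'$. So $\phi$ descends to an isomorphism off finitely many points of $S'$ and extends by Hartogs.

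What you still need to verify is that the strict transform of $E_0$ has coefficient zero in the difference divisor. This can be checked as follows.
\begin{itemize}
\item Near $t=0,1$, $\phi$ is given by a rational function $g$ on $S$ whose divisor is purely horizontal, supported on $P$, $O$, $t_{6P}^{-1}(P)$ and $t_{6P}^{-1}(O)$. This is because $\phi$ is an isomorphism over $\bA^1$.
\item Over $0$, $E_0$ maps onto the multiplicity-$3$ central component of $IV^*$, which is the image of $E_0/\mu_3$ in the minimal resolution of $S'/\mu_3$.
\item Over $1$, $E_0/\mu_4$ is a $(-1)$-curve of multiplicity $4$ in the resolution of $S'/\mu_4$. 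It is contracted, followed by the multiplicity-$2$ curve through it, to the tacnode of $III$.
\item A section meets each fiber in one point of a multiplicity-one component, lying on no other component. So neither locus meets the support of $\mathrm{div}(g)$.
\end{itemize}
Hence $g$ is a unit at the generic point of the image of $E_0$, its pullback has order $0$ along $E_0$, and $k=0$.
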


\begin{proof}
The smooth reduction of $IV^*$ and $III$ Kodaira fibers after a monodromy-trivializing
base change is a well-known computation; see \cite[Ch.~V, \S 10]{BHPV04}. 
Let $S''$ be the resolution of indeterminacy of
$S\times_{\Delta_i} \Delta\dashrightarrow S'$. The biholomorphism $\phi$ pulls back 
to a biholomorphism $\phi''$ of the $\bG_m$-torsor $M''^{\times}$ over $S''$ corresponding to the
pullback of $M$. 

This lift descends to a linearization $\phi'$ of $M'^{\times}$ by Hartogs' extension---indeed,
the bundles $M''$ and the pullback of $M'$ agree away from the contracted curves of $S''\to S'$. 
So $\phi''$ descends to a big open set of $S'$ and then extends to a linearization of $M'$ over all of $S'$. The biholomorphism
$\phi'$ is equivariant
over the action of $t_{6P'}^{-1}$ on $S'$. By an explicit computation of $S''$ and the smooth model $S'$,
one checks that $P'$ meets a $3$-torsion point on $E_\triangle$ and 
a $2$-torsion point on $E_\square$. Hence $t_{6P'}$ acts trivially on either central fiber.
The corresponding restriction $L=M'\vert_{E_\triangle}$ or $M'\vert_{E_\square}$ is a
$3$-torsion or $2$-torsion element of ${\rm Pic}^0(E_\triangle)$ or
${\rm Pic}^0(E_\square)$, respectively.

It follows that on the central fiber, $\phi'$ is an automorphism of the $\bG_m$-torsor
$L^\times$ (acting trivially on the base and by a scalar on the fiber).
\end{proof}

\begin{corollary}\label{fill} 
The quotient $L^\times/\bZ$ arising from Proposition \ref{reduction} is
an abelian surface
$(\wE_\triangle\times E'_0)/\mu_3$ or
$(\wE_\square\times E'_1)/\mu_2$. \end{corollary}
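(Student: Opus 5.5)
The plan is to identify $L^\times/\bZ$ by reducing it to a quotient of a product by a finite group, using only the structure supplied by Proposition \ref{reduction}. Write $E$ for $E_\triangle$ (resp.~$E_\square$) and let $n=3$ (resp.~$n=2$) be the order of the torsion line bundle $L=M'\vert_E\in {\rm Pic}^0(E)$. By Proposition \ref{reduction}(2), the generator of $\bZ$ acts on $L^\times$ trivially on the base $E$ and by a fixed scalar $\lambda\in\bC^*$ on the fibers. Because this action commutes with the principal $\bG_m$-action, the quotient is
$$L^\times/\bZ \;=\; L^\times\times_{\bC^*}\bigl(\bC^*/\lambda^\bZ\bigr),$$
that is, the principal $E'$-bundle over $E$ associated to $L^\times$, where $E'\coloneqq \bC^*/\lambda^\bZ$.

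For this to make sense we need $|\lambda|\neq 1$. I would obtain this by continuity from Proposition \ref{discontinuous}: the Hermitian norm of $u\phi$ is $<1$ on the open part, so after pulling back and passing to $S'$ the limiting scalar satisfies $|\lambda|\le 1$, and it is nonzero because $\phi'$ is a biholomorphism. To rule out $|\lambda|=1$, I would check that the pullback metric has norm bounded uniformly below $1$ near the central fiber.

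The key step is to trivialize $L$ after an isogeny. Since $L$ is $n$-torsion, there is an étale cyclic cover $\wE\to E$ of degree $n$, with $\wE=E/\langle\text{point}\rangle$-dual and Galois group $\mu_n$, such that $L$ pulls back to the trivial bundle. Concretely, $\wE=\mathrm{Spec}\bigoplus_{k=0}^{n-1}L^{k}$, and the tautological section identifies $L^\times\times_E\wE\cong \wE\times\bC^*$. Under this identification the deck group $\mu_n$ acts diagonally: by translation by an $n$-torsion point on $\wE$, and by multiplication by $\zeta\in\mu_n$ on $\bC^*$. Since $\lambda$ commutes with this action, passing to the $\bZ$-quotient gives
$$L^\times/\bZ\cong(\wE\times E')/\mu_n,$$
where $\mu_n$ acts freely via translation on $\wE$ and via the translation $w\mapsto\zeta w$ on $E'$. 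This is the required form, with $E'_0$ and $E'_1$ the two curves $E'$ obtained for $n=3$ and $n=2$.

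Finally, the quotient of an abelian surface by a finite group of translations is again a complex torus, and it is algebraic because it is isogenous to $\wE\times E'$. I expect the main obstacle to be bookkeeping rather than substance: confirming that $\phi'$ really acts by a scalar with $|\lambda|\neq1$, and that the $\mu_n$ acting on the fiber coordinate is the same group as the deck group of $\wE\to E$, so that the action is simultaneously a translation on both factors.
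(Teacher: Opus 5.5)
Your proposal is correct and follows the same route as the paper. Both trivialize the torsion bundle $L$ on the \'etale $\mu_n$-cover $\wE\to E$, quotient the $\bC^*$ factor by the scalar $\lambda$ to obtain $E'$, and observe that the deck group acts by translations on both factors, so the quotient is an abelian surface rather than a bielliptic surface.

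The condition $|\lambda|\neq 1$ is left implicit in the paper, and it is settled as you suggest. The bound $\|u\phi_x\|\le |u|\,C<1$ from Proposition \ref{discontinuous} holds uniformly on all of $S$. At points of the central fiber away from the contracted curves, $t_{6P'}$ acts trivially and $M'$ agrees with the pullback of $M$, so the operator norm there is exactly $|\lambda|$. This gives $0<|\lambda|\le |u|\,C<1$.
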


\begin{proof}
As $L$ is a $3$- or $2$-torsion bundle over $E_\triangle$ or
$E_\square$, its pullback to an \'etale $\mu_3$- or $\mu_2$-cover
$$\wE_\triangle\to E_\triangle\textrm { or }
\wE_\square\to E_\square$$ 
is trivial. Along this pullback, the $\bZ$-action lifts to
an action on $\bC^*$, whose quotient is an elliptic curve $E'_i$. The deck group of
the \'etale cover acts by translation on both the vertical and horizontal factors.
So the quotient is an abelian surface (rather than bielliptic).
\end{proof}

Abstractly, $\wE_\triangle\simeq E_\triangle$ and
$\wE_\square\simeq E_\square$.  We retain the tildes because the covering
maps are nontrivial isogenies of degrees $3$ and $2$, respectively.

\begin{corollary}\label{fill2}
After base changes of orders $3$ and $4$ locally near
$0$ and $1$, respectively, the restriction of $X(u)\to\bP^1$ to the
punctured disk admits a smooth proper extension
$X'\to\Delta$ by complex $2$-tori, with central fiber
as in Corollary \ref{fill}.
\end{corollary}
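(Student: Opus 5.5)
The plan is to build $X'$ as a quotient of the $\bG_m$-torsor over the smooth reduction, following the same pattern as Definition \ref{linearization-def}. Work locally over the base-changed disk $\Delta$ with coordinate $s$. By Proposition \ref{reduction}(1) we have the smooth elliptic surface $S'\to\Delta$ and the line bundle $M'=\cO_{S'}(P'-O')$. By Proposition \ref{reduction}(2), the pulled-back linearization extends to a biholomorphism $\phi'$ of the $\bG_m$-torsor $M'^\times$, covering $t_{6P'}^{-1}$ on $S'$. Over $\Delta^*$, the torsor $M'^\times$ with $\phi'$ is the base change of $(M^{\times,\circ},\phi)$. Hence $M'^\times|_{\Delta^*}/\bZ$ is the base change of $X(u)$ restricted to the punctured disk. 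This requires that the birational modification $S''\to S'$ is an isomorphism over $\Delta^*$, which it is.

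\textbf{Step 1: the $\bZ$-action on all of $M'^\times$ is free and properly discontinuous.} I would rerun the argument of Proposition \ref{discontinuous}. Choose a Hermitian metric on $M'$ over a slightly shrunken closed disk. Since $\phi'$ is a fiberwise isomorphism everywhere (no zero over $s=0$), $\|\phi'_x\|$ is continuous and bounded by some constant. Then $u\phi_0$ with $|u|$ small is uniformly contracting, so the action is free and properly discontinuous on the whole torsor. One issue is that the metric must be compared with the pullback of the metric on $M$ used before. The bound from Proposition \ref{discontinuous} pulls back and controls $\phi'$ over $\Delta^*$. Near $s=0$, compactness of the central fiber $E_\triangle$ or $E_\square$ and continuity of $\phi'$ (from its extension via Hartogs) give $\|u\phi'\|<1$, possibly after shrinking $c$. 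Alternatively, the contraction only needs to be uniform on a compact neighborhood of $s=0$, and there $\phi'$ acts on $L^\times$ by a scalar, which one checks has modulus $\neq 1$ for $u$ small.

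\textbf{Step 2: the quotient $X'\coloneqq M'^\times/\bZ\to\Delta$ is smooth and proper, with torus fibers.} Smoothness follows from freeness. The map is a submersion because $M'^\times\to S'\to\Delta$ is a submersion, $S'$ being a smooth family. For $s\ne0$, the fibers are complex $2$-tori by the argument of the proposition following Definition \ref{linearization-def}, using the exact sequence \eqref{exact}. At $s=0$, the fiber is $L^\times/\bZ$, which Corollary \ref{fill} identifies as the stated abelian surface. Properness follows because a fundamental domain $\{r\le\|v\|\le r'\}$ for the contracting action is compact over a closed disk. Finally, $X'$ restricted to $\Delta^*$ agrees with the base change of $X(u)$, as noted above.

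\textbf{Main obstacle.} I expect the hard part to be the uniform contraction and properness near $s=0$: the extension $\phi'$ is defined via Hartogs, so its norm must be controlled there. In particular, the scalar by which $\phi'$ acts on $L^\times$ must not have modulus $1$. I would handle this by the rescaling in $u$ and continuity of the norm on a compact neighborhood, as in Step 1.
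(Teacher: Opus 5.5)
Your proposal is correct and is essentially the argument the paper leaves implicit: form the $\bZ$-quotient of $M'^\times$ over the smooth reduction $S'$, using the everywhere-invertible extended linearization $\phi'$ of Proposition \ref{reduction}. This quotient agrees with the base change of $X(u)$ over $\Delta^*$ and has central fiber $L^\times/\bZ$, as in Corollary \ref{fill}. Your Step 1 makes explicit the one point the paper does not spell out: the scalar by which $u\phi'$ acts on $L^\times$ has modulus less than $1$, so the action stays free and properly discontinuous, and the quotient proper, all the way to $s=0$.
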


Let $X(u)_t$ be a smooth complex $2$-torus fiber over a base point
$t\in \bP^1\setminus \{0,1,\infty\}$, and put
$\Lambda\coloneqq H_1(X(u)_t, \bZ)$. Let $\gamma_0, \gamma_1,
\gamma_\infty\in \pi_1(\bP^1\setminus \{0,1,\infty\}, t)$ be standard
counterclockwise based meridians, ordered so that
$\gamma_0\gamma_1\gamma_\infty=1$.  Write the monodromy representation as
$$T_i\coloneqq {\rm Mon}(\gamma_i)\in \SL(\Lambda)\simeq \SL_4(\bZ).$$

Let us comment a bit more on the structure of $\Lambda$ and the rank $4$, weight $-1$ VHS
on the corresponding local system. By construction, the complex $2$-tori over a general point
of $\bP^1$ are $\bZ$-quotients of the semiabelian variety $M^\times_t$, which fits into the exact sequence
(\ref{exact}). Thus we have a natural filtration: \begin{align}\label{filt1} 0\to \Lambda^\times \to \Lambda \xrightarrow{\psi} \bZ\to 0 \end{align}
where $\Lambda^\times = \pi_1(M_t^\times)$ and a further exact sequence  \begin{align}\label{filt2} 0\to \bZ \to \Lambda^\times \to \Lambda'\to 0 \end{align}
where $\Lambda'$ is the rank $2$, weight $-1$ local system underlying the $\bZ$-VHS
of the elliptic fibration $S\to \bP^1$.  After good reduction at $0$ or
$1$, it specializes to $H_1(E_\triangle,\bZ)$ or
$H_1(E_\square,\bZ)$.

\begin{proposition}\label{conj-rat} We have conjugacies
$$T_0\sim_\bQ \small{
\begin{pmatrix}
0&-1&0&0\\
1&-1&0&0\\
0&0&1&0\\
0&0&0&1
\end{pmatrix}},\, T_1\sim_\bQ
\small{\begin{pmatrix}
0&1&0&0\\
-1&0&0&0\\
0&0&1&0\\
0&0&0&1
\end{pmatrix}},\,
T_\infty\sim_\bZ\small{ \begin{pmatrix}
1 & 0 & 1 & 0 \\
0 & 1 & 0 & 1 \\
0 & 0 & 1 & 0 \\
0 & 0 & 0 & 1 
\end{pmatrix}}.$$ 
\end{proposition}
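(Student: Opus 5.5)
Proving the three conjugacies amounts to determining the monodromy on the graded pieces of the filtrations (\ref{filt1}), (\ref{filt2}), and then controlling the extension data. For $T_0$ and $T_1$ a rational statement suffices, and it follows from finite order and semisimplicity. For $T_\infty$ we need an integral normal form, which we read off from the Mumford uniformization of Proposition \ref{mumford-1}.

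\emph{Step 1 (graded pieces).} The sub-local system $\bZ\subset\Lambda^\times$ is $H_1$ of the $\bC^*$-fiber of $M^\times_t\to S_t$. Because it is the fiber class of a principal $\bG_m$-bundle, the monodromy acts trivially on it. The quotient $\Lambda\xrightarrow{\psi}\bZ$ records the $\bZ$-action generated by $\phi$, which is globally defined over $\bA^1$ and over the punctured disk at $\infty$, so the monodromy acts trivially there as well. On the middle graded piece $\Lambda'=H_1(S_t,\bZ)$, the monodromies are those of the elliptic surface $S$. For $IV^*$ this is an element of order $3$ with characteristic polynomial $x^2+x+1$; for $III$ it is an element of order $4$ with characteristic polynomial $x^2+1$; for $I_1$ it is a unipotent element conjugate to $\begin{psmallmatrix}1&1\\0&1\end{psmallmatrix}$. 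Each $T_i$ is therefore block upper-triangular with diagonal blocks $1$, $T_i'$, $1$.

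\emph{Step 2 ($T_0,T_1$).} By Corollary \ref{fill2}, after base change of order $3$, resp.\ $4$, the family $X(u)$ extends smoothly across the puncture. Hence $T_0^3=1$ and $T_1^4=1$. A finite-order integral matrix is diagonalizable over $\bC$, and its rational canonical form is determined by its characteristic polynomial. For $T_0$ this polynomial is $(x^2+x+1)(x-1)^2$, and for $T_1$ it is $(x^2+1)(x-1)^2$. Consequently $\Lambda_\bQ$ splits as the direct sum of the $\Phi_3$- (resp.\ $\Phi_4$-) primary part and the $\ker(T-1)$ part, which has dimension $2$. On these parts the action is by the companion matrix and by the identity, respectively. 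These are exactly the displayed matrices.

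\emph{Step 3 ($T_\infty$).} Here we work with Proposition \ref{mumford-1}. The fiber $X(u)_q$ is $(\bC^*)^2$ modulo the lattice generated by $(c_1q,c_2)$ and $(c_3,c_4q)$. Writing $\Lambda=\bZ^2_{\text{circles}}\oplus\bZ^2_{\text{periods}}$, we let $e_1,e_2$ be the loops of the two $\bC^*$ factors and $e_3,e_4$ lifts of the two lattice generators, i.e.\ paths in $(\bC^*)^2$ from $1$ to the corresponding period point. As $q$ goes once around $0$, the lifted path to $(c_1q,c_2)$ winds once in the first coordinate and not at all in the second; the $c_i$ are units with $\log$ single-valued on $\Delta$. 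Thus $e_3\mapsto e_3+e_1$, and symmetrically $e_4\mapsto e_4+e_2$, while $e_1,e_2$ are fixed. In the basis $(e_1,e_2,e_3,e_4)$ this gives precisely the displayed unipotent matrix, as an integral statement. Equivalently, this is the standard Picard--Lefschetz monodromy of the degeneration with logarithm matrix equal to the exponent matrix $\begin{psmallmatrix}1&0\\0&1\end{psmallmatrix}$ of $q$ appearing in the proof of Proposition \ref{mum-cor}.

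The main obstacle is in Step 2: one must make sure that the base-changed family really is smooth, so that $T_i$ has finite order rather than merely finite-order graded pieces with a possibly nontrivial unipotent extension. A priori the extension classes in (\ref{filt1}), (\ref{filt2}) could create Jordan blocks on the eigenvalue-$1$ part. Corollary \ref{fill2}, via Proposition \ref{reduction}, rules this out: $\phi'$ acts on the central fiber by a scalar, and $6P'$ meets a torsion point, so there is no shearing. I would double-check this by verifying that $T_i^{3}$, resp.\ $T_i^4$, acts trivially on $e_4$, using that the translation part $6P$ becomes trivial after reduction.
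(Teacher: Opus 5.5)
Your proof is correct. For $T_\infty$ it is the paper's argument written out in full. The paper reads the integral normal form directly off the $q$-orders $(1,0)$ and $(0,1)$ of the two periods in Proposition \ref{mumford-1}. Your path-lifting computation, $e_3\mapsto e_3+e_1$ and $e_4\mapsto e_4+e_2$ with $e_1,e_2$ fixed, is exactly what that sentence means.

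For $T_0$ and $T_1$ your route is genuinely different, though modestly so.

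\textbf{The paper's route.} It uses the finite-index decomposition (\ref{over}), $\Lambda\supset H_1(\wE,\bZ)\oplus H_1(E'_i,\bZ)$, which comes from the isogeny description of the smooth central fiber in Corollary \ref{fill}. It then observes that the deck generator acts by $\rho$ (resp.\ $-i$) on the first summand and trivially on the second. This exhibits a rational basis in normal form directly.

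\textbf{Your route.} You use only two inputs:
\begin{enumerate}
\item $T_0^3=T_1^4=1$, from the smooth extension of Corollary \ref{fill2} together with Ehresmann;
\item the characteristic polynomials $(x^2+x+1)(x-1)^2$ and $(x^2+1)(x-1)^2$, computed from the graded pieces of the filtration (\ref{filt1}), (\ref{filt2}).
\end{enumerate}
Finite order gives semisimplicity over $\bQ$, and a semisimple rational matrix is determined up to $\bQ$-conjugacy by its characteristic polynomial.

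\textbf{What each buys.} Your version depends less on the fine structure of the central fiber. You never need to know how the deck action splits on $\wE\times E'_i$; triviality in the $E'_i$ direction falls out of the characteristic polynomial. The paper's version yields more than the statement: an explicit sublattice of index $3$ (resp.\ $2$) on which $T_i$ is already in normal form. That sublattice is precisely the starting point for the integral overlattice computation in Theorem \ref{matrices}, which your argument would not supply.
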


\begin{proof}
After the cyclic base changes of orders $3$ and $4$, the family has good
reduction at $0$ and $1$.  Let $F_0$ and $F_1$ denote the resulting smooth
central fibers.  The isogenies in Corollary~\ref{fill} give
decompositions
\begin{align}\label{over}
\Lambda\simeq 
H_1(F_i,\bZ)\supset
H_1(\wE_{\triangle\textrm{ or }\square},\bZ)\oplus H_1(E'_i,\bZ).
\end{align}
Put $\rho=e^{2\pi i/3}$.  With the complex orientations, the counterclockwise
monodromy acts on the first summand by multiplication by $\rho$
in the basis $(1,\rho)$ at $0$ (type $IV^*$), and by multiplication by $-i$ in the
basis $(1,i)$ at $1$ (type $III$).  It acts trivially on the second summand.  Hence
\[
T_0\sim_{\bQ}\small{
\begin{pmatrix}
0&-1&0&0\\
1&-1&0&0\\
0&0&1&0\\
0&0&0&1
\end{pmatrix}}
\textrm{ and }
T_1\sim_{\bQ}\small{
\begin{pmatrix}
0&1&0&0\\
-1&0&0&0\\
0&0&1&0\\
0&0&0&1
\end{pmatrix}}
\] with the $\bZ$-conjugacy class determined in Section \ref{sec-monodromy}
from the explicit overlattice $\Lambda$.
At $\infty$, Proposition~\ref{mumford-1} identifies the family with the
quotient of $(\bC^*)^2$ by the Tate period and the linearization period,
having respective $q$-orders $(1,0)$ and $(0,1)$. So
\[
T_\infty\sim_{\bZ}\small{
\begin{pmatrix}
1&0&1&0\\
0&1&0&1\\
0&0&1&0\\
0&0&0&1
\end{pmatrix}}.
\]
We determine the monodromy representation exactly in Section \ref{sec-monodromy}.
\end{proof}

Denote the {\it local monodromy invariants} by $$\Lambda^{T_i} = 
\{\lambda\in \Lambda\,:\,T_i(\lambda)=\lambda\}.$$
The group $\tfrac{1}{3}\Lambda^{T_0}/\Lambda^{T_0}$ consists of $3$-torsion
translations on the smooth model $X'\to \Delta$ over the order 
$3$ base change $\Delta\to \Delta_0$
of Corollary \ref{fill2}, 
which are invariant under the $\mu_3$ deck action.
Similarly, the finite group $\tfrac{1}{4}\Lambda^{T_1}/\Lambda^{T_1}$ consists
of the $4$-torsion translations on the smooth model over $\Delta\to \Delta_1$
which are invariant under the $\mu_4$ deck action. These torsion sections
play an important role in the following construction.

\begin{construction}[Log transform]\label{con-log}
Given $\overline{a}_0\in \tfrac{1}{3}
\Lambda^{T_0}/\Lambda^{T_0}$ we may define a twisted $\mu_3$-action $\widetilde{\rho}$ 
on the smooth replacement $X'\to \Delta$ (Corollary \ref{fill2}) of 
the base change $X(u)\times_{\Delta_0} \Delta$ by setting $$\widetilde{\rho}(x) = \rho(x) + 
\overline{a}_0.$$ Here $\rho$ denotes the deck action induced on the chosen
smooth extension $X'$. Over the punctured disk, its
quotient identifies with $X(u)|_{\Delta_0^*}$.
We may then form the twisted quotient
$$X'/\langle \widetilde{\rho}\rangle.$$ 
We require the following lemma: 

\begin{lemma}\label{twist} 
The twisted quotient $X'/\langle \widetilde{\rho}\rangle$ is biholomorphic
over the punctured neighborhood $\Delta_0^*$ to $X(u)\vert_{\Delta_0^*}$. Furthermore,
a lift of $\overline{a}_0$ to some $a_0\in \tfrac{1}{3}\Lambda^{T_0}$ determines
explicitly such a biholomorphism.
\end{lemma}

\begin{proof}
Put $v_0=3a_0\in\Lambda^{T_0}$ and choose the generator of the deck group
so that it acts on the base by $s\mapsto\zeta_3s$.  Over $\Delta^*$ define
the section
$$
\sigma_{a_0}(s)=\frac{\log s}{2\pi i}v_0
=\frac{3\log s}{2\pi i}a_0.
$$
Although $\log s$ is multivalued, moving to a new branch changes
$\sigma_{a_0}$ by an integral multiple of $v_0\in\Lambda$.  It therefore
defines a single-valued holomorphic section of the family of complex tori
over $\Delta^*$.  Let
$$
\Psi_{a_0}(x,s)=(x+\sigma_{a_0}(s),s)
$$
be translation by this section.

Since $v_0$ is fixed by $T_0$, analytic continuation gives
$$
\sigma_{a_0}(\zeta_3 s)
=T_0^{-1}\sigma_{a_0}(s)+a_0.
$$
Consequently,
$$
\Psi_{a_0}\circ\rho
=\widetilde\rho\circ\Psi_{a_0}.
$$
Thus $\Psi_{a_0}$ descends to a biholomorphism between the quotients by
$\rho$ and $\widetilde\rho$ over $\Delta_0^*$.  The quotient by $\rho$ is
$X(u)\vert_{\Delta_0^*}$ which proves the lemma. \end{proof}

There is an exactly analogous construction given an element 
$\overline{a}_1\in \tfrac{1}{4}\Lambda^{T_1}/\Lambda^{T_1}$ 
and a lift $a_1\in \tfrac{1}{4}\Lambda^{T_1}$. The $a_i$ encode the ``multiplicity
class'' of \cite[Def.~5.22]{EFGMS25}.

\begin{definition} We define the analytic space $X(u,a_0,a_1)$ by gluing to
$X(u)\vert_{\bP^1\setminus \{0,1\}}$ the twisted quotients of Lemma~\ref{twist}
associated to $a_0\in \tfrac{1}{3}\Lambda^{T_0}$ and
$a_1\in \tfrac{1}{4}\Lambda^{T_1}$. \end{definition}
\end{construction}

\begin{lemma}\label{covect}
The space of globally monodromy-invariant linear functionals $\Lambda\to \bZ$ 
has rank $1$, generated by the map
$\psi\colon \Lambda\to \bZ$ of (\ref{filt1}). \end{lemma}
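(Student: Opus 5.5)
We argue via the filtration (\ref{filt1})--(\ref{filt2}), restricting an invariant functional successively to the graded pieces. First, $\psi$ itself is globally monodromy-invariant. The filtration $\bZ\subset\Lambda^\times\subset\Lambda$ is defined by the global geometry of $X^\circ(u)=M^{\times,\circ}/\bZ$: the sub-local system $\Lambda^\times=\pi_1(M^\times_t)$ is preserved by monodromy, and the quotient $\Lambda/\Lambda^\times\simeq\bZ$ records the deck $\bZ$-action of Definition \ref{z-action}. That $\bZ$-action is a single global action, so monodromy acts on this rank-one quotient by $\pm1$. It must act by $+1$, since the quotient is identified canonically with the deck group of $M^{\times,\circ}\to X^\circ(u)$; alternatively, $T_0,T_1$ have finite order with all eigenvalues on the last two coordinates equal to $1$, and $T_\infty$ is unipotent. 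Hence $\psi$ is invariant.

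Now let $f\colon\Lambda\to\bZ$ be any invariant functional. We show that $f$ vanishes on $\Lambda^\times$, so that $f$ factors through $\psi$; since $\psi$ is surjective, $f\in\bZ\psi$.

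\emph{Step 1: $f$ vanishes on the sub-local system $\bZ$ of (\ref{filt2}).} This piece is the circle class of the $\bC^*$-fiber. By Proposition \ref{mumford-1} and the form of $T_\infty$ in Proposition \ref{conj-rat}, it is a vanishing cycle at $\infty$: it lies in the image of $T_\infty-1$. Explicitly, in the Mumford description the $w$-circle equals $(T_\infty-1)$ applied to the linearization period. For any invariant $f$ we have $f\circ(T_\infty-1)=0$, so $f$ kills the image of $T_\infty-1$. This image is exactly $\Lambda_{\rm van}=\langle e_1,e_2\rangle$ in the basis of Proposition \ref{conj-rat}, which contains both the Tate circle and the $\bC^*$-circle.

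\emph{Step 2: $f$ vanishes on $\Lambda'$.} It remains to see that $f$ restricted to $\Lambda^\times$ induces a functional on $\Lambda'$ that is zero. On $\Lambda'$ the induced monodromy is the monodromy of $S\to\bP^1$, and $T_0$ acts there with eigenvalues $\rho,\rho^2$, which have no fixed vectors. Over $\bQ$, $f|_{\Lambda^\times}$ is a $T_0$-invariant functional on $\Lambda^\times\otimes\bQ$. By Step 1 it vanishes on $\bZ$, so it descends to $\Lambda'\otimes\bQ$, where $T_0$ has no invariant covectors. Hence $f|_{\Lambda^\times}=0$. Alternatively, Step 1 already kills the Tate circle inside $\Lambda^\times$, and $T_0$-invariance kills the rest.

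\emph{Rank count.} The rank statement can be double-checked against Proposition \ref{conj-rat}: the $T_0$-invariant covectors form a rank-$2$ space, and intersecting with the $T_\infty$-invariant covectors, which annihilate $\langle e_1,e_2\rangle$, must cut this down to rank at most $1$. The main obstacle is Step 1: it requires compatibly identifying the sub-$\bZ$ of (\ref{filt2}) with a vanishing cycle, using only the $\bZ$-conjugacy class of $T_\infty$. I would do this directly from the explicit lattice generated by $(c_1q,c_2)$, $(c_3,c_4q)$ and the loops of $(\bC^*)^2$, where $T_\infty$ sends the period lifts to themselves plus the two circle classes.
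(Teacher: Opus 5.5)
Your argument is correct, but it takes a different route from the paper. The paper's proof is direct linear algebra with the explicit integral matrices of Theorem \ref{matrices}. Invariance under $[T_0]_{\cB_0}$ forces a covector into $\langle e_3^*,e_4^*\rangle$. Invariance under $[T_1]_{\cB_0}=C[T_1]_{\cB_1}C^{-1}$ then kills the $e_3^*$-coefficient, leaving $\bZ e_4^*=\bZ\psi$.

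You instead argue structurally from the filtration, in three steps:
\begin{enumerate}
\item $\psi$ is invariant. Your ``deck group'' reasoning is made precise by noting that $\psi$ is the restriction to the fiber of the classifying homomorphism $\pi_1(J)\to\bZ$ of the global covering $M^{\times,\circ}\to X^\circ(u)$. Conjugation therefore cannot change it, since $\bZ$ is abelian.
\item An invariant $f$ kills ${\rm im}(T_\infty-1)$. This image contains the $\bG_m$-circle $\delta$, because the linearization period has $q$-orders $(0,1)$ in Proposition \ref{mumford-1}. This is consistent with the paper's $\delta_0=-2(e_1-e_3)+(e_3-e_2)\in K_\infty$.
\item The induced functional on $\Lambda'$ vanishes, because the $IV^*$ monodromy has no invariant covectors there.
\end{enumerate}

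What your route buys: it uses only the geometry of the construction and rational information. It is therefore independent of the delicate determination of the integers $a,b,p,q,r$ in $C$, and it never uses $T_1$. It also adapts directly to the general constructions in the final remark of Section \ref{sec-speculate}, provided the linearization has a zero or pole and the elliptic monodromy has no invariant covectors.

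What the paper's computation buys: it exhibits $\psi$ concretely as $e_4^*$ in the basis $\cB_0$, which is used throughout Sections \ref{sec-fd}--\ref{sec-homology}.

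One caveat: your closing ``rank count'' is not an argument on its own. The normal forms of $T_0$ and $T_\infty$ in Proposition \ref{conj-rat} are in unrelated bases, so their relative position is not determined. Steps 1 and 2 do not need it.
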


\begin{proof} This is immediate from the monodromy
matrices $[T_0]_{\cB_0}$, $[T_1]_{\cB_1}$, and change of basis $C$
computed in Theorem \ref{matrices}.
\end{proof}

We will see the significance of this global invariant covector in the course of the proof of the main theorem.

\begin{corollary}\label{biell}
When $a_0$ and $a_1$ satisfy
$\psi(a_0)\in \tfrac{1}{3}\bZ\setminus \bZ$, $\psi(a_1)\in \tfrac{1}{4}\bZ\setminus \tfrac{1}{2}\bZ$,
the twisted action of $\widetilde{\rho}$ on $X'_i$, for $i=0,1$, is free. Thus
$X(u,a_0,a_1)\to \bP^1$ has the following multiple fibers:
\begin{enumerate} 
\item multiplicity $3$ over $0$, with reduced fiber a bielliptic surface of
Bagnera--de Franchis type $\mu_3\times \mu_3$.\smallskip
\item multiplicity $4$ over $1$, with reduced fiber a bielliptic surface of
Bagnera--de Franchis type $\mu_2\times \mu_4$.
\end{enumerate}
 \end{corollary}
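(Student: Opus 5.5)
The proof splits into a freeness check on the central fiber, identification of the reduced fiber, and reading off the multiplicity.

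\textbf{Freeness.} We work on the central fiber $F_0=X'_0\simeq(\wE_\triangle\times E'_0)/\mu_3$ of Corollary \ref{fill}. The case of $F_1$ over $1$ is identical with $\mu_4$ replacing $\mu_3$.

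Since the base action $s\mapsto\zeta_3 s$ fixes $s=0$, the only possible fixed points of $\widetilde\rho$ over $\Delta$ lie on $F_0$. On $F_0$ we write $\widetilde\rho(x)=\rho(x)+\overline a_0$. By Proposition \ref{reduction}, $\rho$ acts on $F_0$ as a group automorphism, possibly composed with a translation. Its linear part on $H_1(F_0,\bQ)=\Lambda_\bQ$ is $T_0$, which by Proposition \ref{conj-rat} acts by $\rho$ on the $\wE_\triangle$-direction and trivially on the $E'_0$-direction.

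A fixed point $x$ satisfies $(1-\rho)(x)=-\overline a_0$ up to the translation part of $\rho$. We apply the invariant covector $\psi$, which descends to a homomorphism $\overline\psi\colon F_0\to \bR/\bZ$ on the real torus, namely the projection onto the $E'_0$-circle direction coming from (\ref{filt1}). By Lemma \ref{covect}, $\psi\circ T_0=\psi$, so $\overline\psi\circ(1-\rho)$ is constant. Checking this constant is the delicate point, and it should vanish, because the zero section of Proposition \ref{section-of-J} specializes to a $\rho$-fixed point. Hence a fixed point forces $\psi(a_0)\in\bZ$, contradicting the hypothesis.

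For $\widetilde\rho^2$ the same argument gives $2\psi(a_0)\in\bZ$, which is again impossible when $\psi(a_0)\in\tfrac13\bZ\setminus\bZ$. At $1$ the elements $\widetilde\rho^k$ for $k=1,2,3$ require $k\psi(a_1)\in\bZ$. This is excluded exactly when $\psi(a_1)$ has order $4$ in $\bQ/\bZ$, i.e.\ when $\psi(a_1)\in\tfrac14\bZ\setminus\tfrac12\bZ$. This is why the hypothesis excludes $\tfrac12\bZ$ and not just $\bZ$.

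\textbf{Identification of the reduced fiber.} The quotient $F_0/\langle\widetilde\rho\rangle$ is a free quotient of an abelian surface by an automorphism. Its linear part acts by $\zeta_3$ on one elliptic factor and trivially on the other. Such a quotient is bielliptic, and the type is read off from the classification: we need the full group of translations together with the twist.

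Here $F_0=(\wE_\triangle\times E'_0)/\mu_3$ is already a $\mu_3$-quotient by diagonal translations. The bielliptic surface is therefore $(\wE_\triangle\times E'_0)/G$, where $G$ is generated by the translation $\mu_3$ and the order-$3$ rotation-plus-translation. So $G\simeq\mu_3\times\mu_3$, which is Bagnera--de Franchis type with $j=0$. Similarly $G\simeq\mu_2\times\mu_4$ at $1$, with the rotation by $i$ on $\wE_\square$. One must verify that $G$ acts by translations on $E'$ and faithfully, which follows from $\psi(a_i)$ having exact order $3$, resp.\ $4$.

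\textbf{Multiplicity.} Since $\widetilde\rho$ acts freely on $X'$ over the $3$-fold (resp.\ $4$-fold) cyclic cover $s\mapsto s^3$, the quotient is smooth. The pullback of $t$ is $s^3$, so the fiber over $0$ is $3$ times the reduced fiber $F_0/\mu_3$, and likewise the fiber over $1$ is $4$ times the reduced fiber.

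The main obstacle is the constant in the freeness step: pinning down the translation part of $\rho$ on $F_0$ relative to $\psi$. This reduces to Proposition \ref{reduction}(2), where $\phi'$ acts by a scalar on $L^\times$ and hence preserves the zero fiber circle.
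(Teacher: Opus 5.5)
Your proposal is correct and follows the paper's proof: $\psi$ induces $F_i\to\bR/\bZ$, on which $\widetilde\rho$ acts as translation by $\psi(a_i)$, of exact order $3$ resp.\ $4$, so no nontrivial power of $\widetilde\rho$ has a fixed point; your identification of the group $G$ and of the multiplicities makes explicit what the paper leaves implicit. The step you flag as delicate (that $\rho$ itself induces the identity on $\bR/\bZ$), which the paper asserts without comment, is better settled not through the zero section, whose specialization to $F_i$ would itself need checking, but as follows: the deck action already has order $d_i\in\{3,4\}$ on $L^\times$, so a lift of $\rho$ to $\Lambda\otimes\bR$ has $d_i$-th power equal to translation by some $w_i\in\pi_1(L^\times)=\ker\psi$, which forces the $\psi$-component of its translation part to vanish.
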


See \cite[Ch.~V, \S 5]{BHPV04} for the classification of bielliptic surfaces.
 
 \begin{proof}
This follows from the description of the abelian surfaces
$F_0$ and $F_1$ in Corollary \ref{fill} and 
of the twisted action in Construction \ref{con-log}.
The covector $\psi$ induces a map $F_i\to \bR/\bZ$, 
on which the twisted action is translation by $\psi(a_i)$. By hypothesis, 
this translation has exact order $3$, respectively $4$. 
Thus every nonidentity element acts without fixed points, so the action is free.
 \end{proof}

\section{Main result}\label{sec-main}

\begin{theorem}\label{main-thm}
Let $Y \coloneqq X(u,a_0,a_1)$ where local monodromy invariants
$a_0\in  \tfrac{1}{3}\Lambda^{T_0}$ and $a_1\in \tfrac{1}{4}\Lambda^{T_1}$ 
are chosen so that $|12\psi(a_0+a_1)|=1$.
 Then for all $u\in \Delta^*$ in a sufficiently small punctured disk,
 $$Y\simeq_{\rm diffeo} \bS^6$$ is a compact complex manifold,
  diffeomorphic to the $6$-sphere.
\end{theorem}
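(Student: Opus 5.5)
The plan is the standard one for recognizing $\bS^6$. Show that $Y$ is a compact complex manifold which is simply connected and has the integral homology of $\bS^6$. Then invoke Smale's h-cobordism theorem in dimension $6$: a simply connected closed smooth manifold with $H_*\simeq H_*(\bS^6)$ is a homotopy sphere. Since $\Theta_6=0$ (Kervaire--Milnor), it is diffeomorphic to $\bS^6$.

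\textbf{Compactness and smoothness.} $X(u)$ is compact and smooth for $|u|$ small, by Propositions \ref{discontinuous} and \ref{mum-cor}. The log transforms in Construction \ref{con-log} replace neighborhoods of the fibers over $0,1$ by the quotients $X'_i/\langle\widetilde\rho\rangle$, glued via Lemma \ref{twist}. Under our hypothesis these quotients are free, as in Corollary \ref{biell}. I must check that $|12\psi(a_0+a_1)|=1$ forces $\psi(a_0)\in\tfrac13\bZ\setminus\bZ$ and $\psi(a_1)\in\tfrac14\bZ\setminus\tfrac12\bZ$. Write $\psi(a_0)=k/3$ and $\psi(a_1)=l/4$, so $12\psi(a_0+a_1)=4k+3l=\pm1$; this forces $3\nmid k$ and $l$ odd. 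So $Y$ is a compact complex manifold with multiple fibers of multiplicities $3$ and $4$.

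\textbf{Fundamental group.} I would apply van Kampen to $Y\to\bP^1$. Over $U=\bP^1\setminus\{0,1,\infty\}$ there is a torus bundle, which has a section by Proposition \ref{section-of-J}, so $\pi_1(Y|_U)=\Lambda\rtimes\pi_1(U)$. Filling in each fiber adds relations.
\begin{itemize}
\item Over $\infty$, the Kulikov fiber with its section kills $\gamma_\infty$ together with the vanishing cycles. The neighborhood retracts to the central fiber, whose $\pi_1$ is $\Lambda$ modulo the image of $T_\infty-1$, i.e. the span of $e_1,e_2$.
\item Over $0$ and $1$, the neighborhood retracts to the multiple fiber $X'_i/\mu$. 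There the relation is $\tilde\gamma_i^{3}$ (resp. $\tilde\gamma_i^4$) equal to a lattice element. Here $\tilde\gamma_i$ is a lift of $\gamma_i$ twisted by $a_i$, which is a loop around the multiple fiber, and $\Lambda$ is quotiented by $\mathrm{im}(T_i-1)$.
\end{itemize}
The group $\pi_1(U)$ is free on $\gamma_0,\gamma_1$, and the relations $\gamma_\infty=1$, $\gamma_0^3=v_0$, $\gamma_1^4=v_1$ combine. The abelianization and the quotient of $\Lambda$ should then be controlled by the coinvariants $\Lambda/\sum_i\mathrm{im}(T_i-1)$. By Lemma \ref{covect} and the explicit matrices of Theorem \ref{matrices}, this is $\bZ$ via $\psi$. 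The remaining generator is then killed by the relation $\gamma_0\gamma_1\gamma_\infty=1$ together with the orbifold relations, where $\psi$ of the twists gives $12\psi(a_0+a_1)=\pm1$. The base orbifold $\bP^1(3,4,\infty)$ with $\infty$ filled has trivial orbifold $\pi_1$, since $\gcd(3,4)=1$. Combining, $\pi_1(Y)$ is generated by the image of $\Lambda$, which collapses to $\bZ/(4k+3l)=0$. Nonabelian issues must be handled: I would show the group is generated by commuting lattice elements plus $\gamma$'s, each of which becomes trivial.

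\textbf{Homology.} With $\pi_1=1$ we get $H_1=0$ and $H^1=0$. I would compute $\chi(Y)$ additively over fibers. Torus fibers contribute $0$. Bielliptic fibers have $\chi=0$. The fiber over $\infty$ (glued $dP_6$) has $\chi=6-6+\dots$, and I need it to equal $2$: the $dP_6$ contributes $\chi=6$, gluing the hexagon yields three curves and two points, giving $1\cdot 0+3\cdot 0$ pieces plus points, so $\chi=2$. Thus $\chi(Y)=2$. To get all homology, I would use the Leray spectral sequence for $R\pi_*\bZ$ over $\bP^1$ to show $H_2(Y)=0$ and $H_3(Y)=0$ torsion-free. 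Then Poincaré duality, $\chi=2$, and the vanishing of $H_1,H_2$ give $H_3=0$ and $H_4=H_5=0$, with torsion controlled by universal coefficients. Concretely, I need $H^2(Y,\bZ)=0$, which amounts to computing $H^0(\bP^1,R^2\pi_*\bZ)$, $H^1(R^1)$, and $H^2(R^0)$ using the monodromy invariants and local contributions at the special fibers.

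\textbf{Main obstacle.} The main obstacle is this Leray computation, especially integrally. The terms $H^1(\bP^1,R^1\pi_*\bZ)$ and $H^0(R^2\pi_*\bZ)$ depend on the exact $\bZ$-conjugacy of the $T_i$ and on the stalks at the multiple fibers. I would attack it by computing the group cohomology of the triangle-type monodromy group acting on $\Lambda$ and $\wedge^2\Lambda$, using the explicit matrices of Section \ref{sec-monodromy}.
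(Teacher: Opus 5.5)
Your architecture matches the paper's. You use van Kampen over $\bP^1$ to get a cyclic group of order $|4k+3l|=|12\psi(a_0+a_1)|$, and $\chi_{\rm top}(Y)=2$ from the two $0$-dimensional strata of the glued $dP_6$. Then come a Leray computation, Hurewicz, Smale and $\Theta_6=0$.

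Your $\pi_1$ sketch has two slips, neither of which changes the answer. First, ${\rm im}(T_\infty-I)=\langle e_1-e_3,e_3-e_2\rangle$; the lattice $\langle e_1,e_2\rangle$ is ${\rm im}(T_0-I)$. Second, the multiple fibers do not quotient $\Lambda$ by ${\rm im}(T_i-I)$, because $\Lambda=\pi_1(F_i)$ injects into $\pi_1(G_i)$. What cuts $\Lambda$ down to the central cyclic group generated by the image of $e_4$ is that the normal closure of the vanishing cycles at $\infty$ is $\ker\psi$.

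The genuine gap is in the homology step. You reduce it to proving $H^2(Y,\bZ)=0$, and assume that this, together with $\pi_1=1$, $\chi=2$ and Poincar\'e duality, gives a homology sphere. It does not. By universal coefficients, $H^2(Y,\bZ)={\rm Hom}(H_2(Y),\bZ)$ sees only the free part of $H_2(Y)$. Any torsion in $H_2(Y)$ reappears as torsion in $H^3(Y,\bZ)\cong H_3(Y)$, and neither $\chi$ nor duality detects it. For instance, the double of a $6$-dimensional thickening of the Moore space $S^2\cup_n e^3$ is simply connected with $\chi=2$ and $H^2=0$, yet $H_2\cong H_3\cong\bZ/n$.

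So you must also control total degree $3$ of the Leray spectral sequence, and the decisive input there is a differential, not an $E_2$-term. The term $E_2^{2,1}=H^2(\bP^1,R^1f_*\bZ)=\bZ\,\omega[e_1^*-e_2^*]$ survives as the subgroup $E_2^{2,1}/{\rm im}(d_2)$ of $H^3(Y,\bZ)$. This subgroup is nonzero unless $d_2\colon H^0(\bP^1,R^2f_*\bZ)=2\bZ\xi\to E_2^{2,1}$ is onto. You also need $H^1(\bP^1,R^2f_*\bZ)=0$.

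Even in degree $2$, listing $E_2$-terms does not suffice, since $E_2^{2,0}=\bZ\omega$ and $E_2^{0,2}=2\bZ\xi$ are both nonzero. Using $\pi_1=1$ you can kill $E_\infty^{2,0}$: it is finite, because $H^1(Y)=0$ forces $d_2$ to be injective on $H^0(R^1f_*\bZ)\cong\bZ$, and it sits inside the torsion-free group $H^2(Y,\bZ)$. Nothing formal, however, makes $d_2(2\xi)$ nonzero, let alone a generator.

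The paper obtains this in three steps:
\begin{itemize}
\item The specialization indices at the multiple fibers (Lemma~\ref{specialization-lattices}) give $H^0(R^1f_*\bZ)=12\bZ\psi$ and $H^0(R^2f_*\bZ)=2\bZ\xi$.
\item The transgression $d_2(12\psi)=p\,\omega$ is read off from $\widetilde\rho_i^{\,d_i}=t_{v_i+w_i}$.
\item Multiplicativity of the spectral sequence is applied to $(2\xi)(12\psi)=12(2\theta)$ and $(2\xi)(2\theta)=0$, together with the coinvariant identity $[\xi]=12[(e_1^*-e_2^*)\wedge\psi]$. This forces $d_2(2\xi)=-p\,\omega[e_1^*-e_2^*]$ and $d_2(2\theta)=p\,\omega[(e_1^*-e_2^*)\wedge\psi]$ (Lemma~\ref{transgression-calculation}).
\end{itemize}
Your proposed route, group cohomology of the monodromy group over $U$, sees neither the stalks at the multiple fibers nor these differentials.
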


The hypothesis $|12\psi(a_0+a_1)|=1$ can be satisfied. For instance, 
take $a_0=\tfrac{1}{3}e_4$ and $a_1=-\tfrac{1}{4}f_4$ in the notation
of Theorem \ref{matrices}. Furthermore, this condition ensures
that the hypotheses of Corollary \ref{biell} hold.

\begin{proof}
By Section~\ref{sec-fd}, $Y$ is simply connected.  By
Section~\ref{sec-homology}, it has the integral homology of $\bS^6$.
The Hurewicz theorem implies that $Y$ is a homotopy $6$-sphere.
Smale's generalized Poincar\'e theorem identifies it topologically with
$\bS^6$ \cite{Sma61}. 

By the $h$-cobordism theorem, oriented diffeomorphism
classes of homotopy $6$-spheres are classified by the group $\Theta_6$
\cite{Mil65}. This group is trivial by the computation of Kervaire--Milnor
\cite[\S 7]{KM63}. Hence
$Y$ is also diffeomorphic to the standard $6$-sphere.
\end{proof}

\section{The monodromy representation}\label{sec-monodromy}

The overlattice $\Lambda$ in (\ref{over})
is explicitly computable, with the containment in (\ref{over})
of index $3$ and $2$ respectively.
We now compute this overlattice,
record integral normal forms of $T_0$ and $T_1$ 
in an appropriate oriented basis of $\Lambda$, and determine
the change of basis between the bases expressing
these two normal forms.

These computations are necessary to
compute the fundamental group
and integer homology of $Y$. Recall $\rho=e^{2\pi i/3}$.

\begin{theorem}\label{matrices}
Let
$$
e_1,e_2,\delta_0,e_4
=(1,0),(\rho,0),(0,1),(0,\tau(E'_0))\in \bC^2
$$
be a $\bZ$-basis of
$H_1(\wE_\triangle,\bZ)\oplus H_1(E'_0,\bZ)$, and let
$$
f_1,f_2,\delta_1,f_4
=(1,0),(i,0),(0,1),(0,\tau(E'_1))\in \bC^2
$$
be a $\bZ$-basis of
$H_1(\wE_\square,\bZ)\oplus H_1(E'_1,\bZ)$.
The lattices of the two good reduction fibers are, respectively, the
overlattices obtained by adjoining
$$
e_3=\tfrac{1}{3}(2+\rho,1),
\qquad
f_3=\tfrac{1}{2}(1+i,1).
$$
In the local bases
$\cB_0=(e_1,e_2,e_3,e_4)$ and $\cB_1=(f_1,f_2,f_3,f_4)$
of $\Lambda$,
$$
[T_0]_{\mathcal B_0}=\small{
\begin{pmatrix}
0&-1&-1&0\\
1&-1&0&0\\
0&0&1&0\\
0&0&0&1
\end{pmatrix}},
\qquad
[T_1]_{\mathcal B_1}=\small{
\begin{pmatrix}
0&1&0&0\\
-1&0&-1&0\\
0&0&1&0\\
0&0&0&1
\end{pmatrix}}.
$$
The circle classes of the principal $\bG_m$-bundle $M^\times$
are
$$
\delta_0=-2e_1-e_2+3e_3,
\qquad
\delta_1=-f_1-f_2+2f_3.
$$
Furthermore, the columns of the matrix
$$
C=\small{
\begin{pmatrix}
1&1&0&1\\
0&1&0&0\\
0&-1&1&0\\
0&0&0&1
\end{pmatrix}}
$$
express $\mathcal B_1$ in the basis $\mathcal B_0$, and
$C(\delta_1)=\delta_0$.
\end{theorem}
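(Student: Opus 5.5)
The plan is to compute each overlattice directly from Corollary \ref{fill}, and only then read off the monodromy matrices and the change of basis. Near $0$, after the order $3$ base change, the smooth central fiber $F_0$ is the quotient of $\wE_\triangle\times E'_0$ by the diagonal $\mu_3$ coming from the étale cover $\wE_\triangle\to E_\triangle$. So $H_1(F_0,\bZ)=\Lambda$ is obtained from $H_1(\wE_\triangle)\oplus H_1(E'_0)$ by adjoining a generator of this $\mu_3$.

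This generator is a simultaneous translation. On the first factor it is a $3$-torsion point of $\wE_\triangle=\bC/\bZ[\rho]$ fixed by multiplication by $\rho$, as required by the $\mu_3$-equivariance in Proposition \ref{reduction}. Such points are the multiples of $(2+\rho)/3=(1-\rho)^{-1}\cdot(\text{unit})$ modulo $\bZ[\rho]$. On the second factor it is the $3$-torsion point $1/3$ of $E'_0=\bC^*/u^{\bZ}$, namely the cube root of unity acting on the $\bG_m$-fiber; this is the torsion of $L$ identified in Proposition \ref{reduction}. This gives $e_3$. Up to sign and the choice of generator, the normalization comes from matching orientations.

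The $I_1$/$III$ case works the same way. The $i$-fixed $2$-torsion point of $\bZ[i]$ is $(1+i)/2$, which gives $f_3$. The identities $\delta_0=-2e_1-e_2+3e_3$ and $\delta_1=-f_1-f_2+2f_3$ then follow by direct substitution. Here $\delta_i$ is the circle class $(0,1)$, which is the fiber of $\bC^*\to M^\times_t$ as in (\ref{filt2}).

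For the matrices, I will use the proof of Proposition \ref{conj-rat}. $T_0$ acts as multiplication by $\rho$ on the first $\bC$-factor and trivially on the second. Computing $\rho e_1=e_2$, $\rho e_2=-e_1-e_2$, and $\rho\cdot e_3=\tfrac13(\rho(2+\rho),1)=e_3-e_1$ (using $\rho^2=-1-\rho$) gives the stated columns. Likewise, multiplication by $-i$ gives $-if_1=-f_2$, $-if_2=f_1$, and $-if_3=\tfrac12(1-i,1)=f_3-f_2$. The direction conventions ($\rho$ versus $\rho^{-1}$) must agree with Proposition \ref{conj-rat}, and I will check them against those matrices.

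The main obstacle is the change of basis $C$. The two local bases live in different fibers $F_0$, $F_1$, so they can only be compared after parallel transport to the base point $t$ through the filtration (\ref{filt1})--(\ref{filt2}). My approach is to make the identification canonical on the graded pieces:
\begin{enumerate}
\item The circle class must be preserved, giving $C\delta_1=\delta_0$.
\item On $\Lambda'$, $H_1(\wE_\triangle)$ and $H_1(\wE_\square)$ (rescaled by the isogenies) must match the standard $\SL_2(\bZ)$ presentation of the $(3,4,\infty)$ triangle group. In that presentation $T_0$ and $T_1$ are the order-$3$ and order-$4$ generators whose product is parabolic. This fixes the upper-left block, up to the known freedom.
\item The $e_4$/$f_4$ direction is pinned down by requiring that the product $T_\infty^{-1}=T_0T_1$, computed as $[T_0]_{\cB_0}\,C[T_1]_{\cB_1}C^{-1}$, be unipotent and $\bZ$-conjugate to the matrix in Proposition \ref{conj-rat}.
\end{enumerate}
I will first guess $C$ of the shape given, which is upper unitriangular-like with $\psi$ preserved. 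I then verify three things: $C$ is in $\SL_4(\bZ)$, $C\delta_1=\delta_0$, and $T_0\,CT_1C^{-1}$ is unipotent with rank-$2$ nilpotent part of the Mumford type. The remaining ambiguity is conjugation by the centralizer. It affects only the entry coupling $f_4$ to $e_1$, and that entry will be fixed by the Tate/linearization period orders $(1,0),(0,1)$ at $\infty$.
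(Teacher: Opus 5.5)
Your computation of the overlattices, of $[T_0]_{\cB_0}$ and $[T_1]_{\cB_1}$, and of the expressions for $\delta_0,\delta_1$ agrees with the paper. (One small slip: the fiber over $1$ is of type $III$, not $I_1$.)

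The gap is in the change of basis $C$, which is the substantive part of the theorem. Guessing $C$ and checking that $C\in\SL_4(\bZ)$, that $C\delta_1=\delta_0$, and that $T_0CT_1C^{-1}$ is unipotent of Mumford type shows only that the displayed matrix is \emph{consistent} with these necessary conditions. It does not show that it is the actual geometric identification of the two local markings. For that you need to know the conditions determine $C$ up to re-marking. Your assertion that the residual ambiguity is ``conjugation by the centralizer,'' affecting only the $(f_4,e_1)$ entry, is unproved and not correct as stated.

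The identification on $\Lambda'$, namely $[f_1]=[e_1]$ and $[f_2]=[e_1+e_2-e_3]$, pins down the upper-left block only modulo $\delta_0$. Together with $C\delta_1=\delta_0$ it leaves $Cf_1=e_1+a\delta_0$, $Cf_2=e_1+e_2-e_3+(2b-a)\delta_0$, and $Cf_3=e_3+b\delta_0$. The lift through (\ref{filt1}) further leaves $Cf_4=e_4+pe_1+qe_2+re_3$. A change of either local marking by an element centralizing $T_0$ or $T_1$ merely conjugates $T_0CT_1C^{-1}$, whereas for instance $a\neq 0$ gives $N_\infty^2\neq 0$. So these parameters are genuinely different candidate gluings, not re-markings, and they must be eliminated.

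The missing step is exactly that elimination, which the paper carries out for the general $C(a,b,p,q,r)$ with $N_\infty=(T_0CT_1C^{-1})^{-1}-I$:
\begin{itemize}
\item $N_\infty^2e_j=a\delta_0$ for $j=1,2,3$, so $N_\infty^2=0$ forces $a=0$; then $N_\infty^2e_4=0$ forces $r=-3q$.
\item With $m=p-2q$, the $2\times2$ minors of $N_\infty$ have gcd $|m(1-6b)|$. Primitivity of the rank-$2$ image therefore forces $b=0$ and $m=\pm1$.
\item The simple zero (not pole) of $\phi$ at $\infty$ gives $m=1$.
\end{itemize}
Only $q$ survives, and it is the re-marking $e_4\mapsto e_4+k\delta_0$, which commutes with $T_0$. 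It shifts $Cf_4$ by a multiple of $\delta_0$, so it moves the $(1,4)$, $(2,4)$ and $(3,4)$ entries together, while $p=2q+1$ is forced. Normalizing $q=0$ yields the displayed $C$. Once this uniqueness argument is in place, your verification step is only a consistency check, not the proof.
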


\begin{proof} The adjoining vectors $e_3$ and $f_3$
are determined by the requirement that their
coordinates in $\wE_\triangle$ or $\wE_\square$
are non-zero and fixed by the order $3$, resp.~$4$, automorphism, and
that $\delta_0$ and $\delta_1$ represent the circle classes
of the principal $\bG_m$-bundle $M^\times$.

Multiplication by $\rho$ sends $e_1\mapsto e_2$ and 
$e_2\mapsto -e_1-e_2$, while fixing $\delta_0$ and $e_4$.  Since
$\delta_0=3e_3-2e_1-e_2$, it follows that
$$
T_0e_1=e_2,\qquad T_0e_2=-e_1-e_2,\qquad
T_0e_3=e_3-e_1,\qquad T_0e_4=e_4.
$$
This gives the displayed matrix for $T_0$.  Similarly, multiplication by
$-i$ sends $f_1\mapsto -f_2$ and $f_2\mapsto f_1$.  Using
$\delta_1=2f_3-f_1-f_2$, we obtain
$$
T_1f_1=-f_2,\qquad T_1f_2=f_1,\qquad
T_1f_3=f_3-f_2,\qquad T_1f_4=f_4,
$$
which gives the displayed matrix for $T_1$.

It remains to compare the two local markings in which we have
the displayed normal forms.  In
$\Lambda'=\Lambda^\times/\bZ\delta_i$, the monodromy representation
of the underlying elliptic surface $S\to \bP^1$ gives the identities
$$
[f_1]=[e_1],\qquad
[f_2]=\frac{[e_1]+2[e_2]}{3}
$$
in $\Lambda'\otimes\bQ$ (rational coefficients arise because
 $[e_1],[e_2]$ and $[f_1],[f_2]$
generate sublattices of indices $3$ and $2$ in the homology lattices of
the smooth reductions $E_\triangle$ and $E_\square$ over $0$ and $1$).
Since
$3[e_3]=2[e_1]+[e_2]$ in $\Lambda'$, the second equality is equivalently
\begin{align}\label{quotient-equality}
[f_1]=[e_1],\qquad [f_2]=[e_1+e_2-e_3].
\end{align}
It remains to lift this identification through (\ref{filt2}) and
(\ref{filt1}). 

The restriction of $C$ to $\Lambda^\times$ is determined up to shears
which act trivially on $\bZ\delta_0$ and on $\Lambda'$.  Thus there are
integers $a,c$ such that
$$
Cf_1=e_1+a\delta_0,
\qquad
Cf_2=e_1+e_2-e_3+c\delta_0.
$$
The condition $C\delta_1=\delta_0$, together with
$\delta_1=-f_1-f_2+2f_3$, then gives
$$
Cf_3=e_3+\tfrac{1}{2}(a+c)\delta_0.
$$
Since $\delta_0$ is primitive, $a+c$ is even.  Writing $a+c=2b$, and then lifting
from $\Lambda^\times$ to $\Lambda$ in a manner respecting (\ref{filt1}), one obtains
$$
\begin{aligned}
Cf_1&=e_1+a\delta_0,\\
Cf_2&=e_1+e_2-e_3+(2b-a)\delta_0,\\
Cf_3&=e_3+b\delta_0,\\
Cf_4&=e_4+pe_1+qe_2+re_3
\end{aligned}
$$
for some $a,b\in \bZ$, with $p,q,r\in\bZ$ determining the further
lift from $\Lambda^\times$ to $\Lambda$.  Equivalently,
$$
C=\small{
\begin{pmatrix}
1-2a&1-4b+2a&-2b&p\\
-a&1-2b+a&-b&q\\
3a&-1+6b-3a&1+3b&r\\
0&0&0&1
\end{pmatrix}}.
$$

We now use the integral monodromy of the Mumford construction at
$\infty$ to determine these five integers.  Put
$$
N_\infty=(T_0CT_1C^{-1})^{-1}-I.
$$
The description in Proposition \ref{conj-rat} 
of the monodromy of the Mumford degeneration implies that
$N_\infty^2=0$ and that ${\rm im}(N_\infty)$ is a primitive rank $2$
sublattice of $\Lambda$.  Direct multiplication gives
$$
N_\infty^2e_j=a\delta_0\quad (j=1,2,3),
$$
so $a=0$.  After setting $a=0$, its remaining column is
$$
N_\infty^2e_4=(3q+r)
((1-4b)e_1-2be_2+(6b-1)e_3),
$$
and hence $r=-3q$.  Set $m=p-2q$.  The matrix of $N_\infty$ now becomes
$$
\small{
\begin{pmatrix}
1-4b&1-4b&1-4b&4bm\\
-2b&-2b&-2b&(2b-1)m\\
6b-1&6b-1&6b-1&(1-6b)m\\
0&0&0&0
\end{pmatrix}}.
$$
The greatest common divisor of its $2\times2$ minors is
$|m(1-6b)|$.  Primitivity of ${\rm im}(N_\infty)$ therefore implies
$b=0$ and $m=\pm1$.
Since $b=0$, 
$$
N_\infty e_j=e_1-e_3\quad (j=1,2,3),
\qquad N_\infty e_4=\pm(e_3-e_2).
$$
The first vanishing cycle comes from the $I_1$ monodromy of the
elliptic surface, while the second comes from the linearization.  For
a counterclockwise meridian, the condition
$\operatorname{ord}_\infty(\phi)=1$ gives $m=1$; a pole would give
$m=-1$ and negate precisely the second contribution to the monodromy
of the Mumford construction.  It follows that
$$
p=2q+1,\qquad r=-3q,
$$
and therefore
$$
Cf_4=e_4+e_1-q\delta_0.
$$
The remaining integer $q$ is precisely the freedom to replace 
$e_4\mapsto e_4+k\delta_0$. This shear commutes with the local monodromy
at $0$, as both $e_4$ and $\delta_0$ are fixed. 
Thus, we may adjust the marking
so that $q=0$.  Then
\begin{align*} 
&Cf_1=e_1, &&Cf_2=e_1+e_2-e_3, \\
 &Cf_3=e_3, 
 &&Cf_4=e_1+e_4,
\end{align*}
which proves the displayed formula for $C$.
\end{proof}

Since
$X(u)\to \bP^1$ has a section, the two local monodromy
matrices, together with the change of basis $C$,
 determine the topology of the real $4$-torus bundle
 arising from restricting to $\bP^1\setminus \{0,1,\infty\}$.

\section{The fundamental group}\label{sec-fd}

Put
$
U=\bP^1\setminus\{0,1,\infty\},$ and 
$J=Y\vert_U=X(u)\vert_U.
$
The section of Proposition~\ref{section-of-J} gives a base point in every
fiber.  Hence
$$
\pi_1(J)=\Lambda\rtimes\pi_1(U),
$$
where the action on $\Lambda$ is the monodromy representation.
Write
$
\mathcal B_0=(e_1,e_2,e_3,e_4)
$
for the basis of Theorem~\ref{matrices}, normalized so that
$$
\psi(e_4)=1,
\qquad
\psi(e_1)=\psi(e_2)=\psi(e_3)=0.
$$
We use the same letter $\psi$ for its extension to
$\Lambda\otimes\bQ$. In the common basis $\mathcal B_0$ we have
\begin{align}\label{common-basis}
[T_1]_{\mathcal B_0}=C[T_1]_{\mathcal B_1}C^{-1}
=\small{
\begin{pmatrix}
-1&1&-1&2\\
-1&0&-1&1\\
1&1&2&-1\\
0&0&0&1
\end{pmatrix}}.
\end{align}
In particular, $\psi$ is fixed by both monodromies.

Since $T_\infty=(T_0T_1)^{-1}$, the preceding matrices give
$$
(T_\infty-I)\Lambda
=\langle e_1-e_3,e_3-e_2\rangle=:K_\infty.
$$
These are precisely the two vanishing cycles (i.e.~generators of $W_{-2}H_1$)
 of the  Mumford construction at $\infty$.  Thus
$
K_\infty\subset \pi_1(J)$
is killed by filling the fiber at $\infty$ (these cycles collapse to points at either of the
two $0$-dimensional toric strata of the glued $dP_6$).  Since
$$
T_0(e_1-e_3)=e_1+e_2-e_3,
$$
the three vectors $e_1-e_3$, $e_3-e_2$, and $e_1+e_2-e_3$ form a basis
of $\ker\psi$.  Hence the normal closure of $K_\infty$ is exactly
$$
\ker\psi=\langle e_1,e_2,e_3\rangle.
$$
Thus the image of $\Lambda$ in $\pi_1$ of the extension of $J$ over $\infty$ is infinite cyclic,
generated by the image $c$ of $e_4$.  Because $\psi$ is monodromy invariant, $c$ is
central.

Put
$v_0=3a_0\in\Lambda^{T_0},$
$v_1=4a_1\in\Lambda^{T_1},$
and set
$$
m=\psi(v_0)=3\psi(a_0)\in \bZ,
\qquad
n=\psi(v_1)=4\psi(a_1)\in \bZ.
$$
Let $x_0,x_1$ be the meridians $\gamma_0, \gamma_1$ about $0,1$ lifted
to the section of $J$. Since the section extends
over the Mumford construction at $\infty$, we have the relation
$x_0x_1=1$.  

After quotienting by $\ker(\psi)$, 
the two logarithmic fillings over $0$, $1$ by the multiple fibers impose relations, see (\ref{pi1-ref}) below,
$$
x_0^3=c^m,
\qquad
x_1^4=c^n.
$$

\begin{proposition}\label{fundamental-group}
Let $p\coloneqq 12\psi(a_0+a_1)$.
The fundamental group of $Y$ is trivial when $|p|=1$. \end{proposition}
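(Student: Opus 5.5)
Following the setup just before the statement, I would compute $\pi_1(Y)$ by van Kampen, treating $Y$ as $J$ with three fibers glued in. First, $\pi_1(J)=\Lambda\rtimes\pi_1(U)$ is generated by $\Lambda$ together with the lifts $x_0,x_1$ of the meridians, where $x_\infty=(x_0x_1)^{-1}$ and each $x_i$ acts on $\Lambda$ by $T_i$ under conjugation. Gluing in each fiber amounts to attaching a neighborhood $N_i$ of the fiber; $N_i$ retracts onto the fiber, and $\partial N_i\simeq$ (torus bundle over a circle) meets $J$. Van Kampen then says $\pi_1(Y)$ is $\pi_1(J)$ modulo the normal closure of the kernels of $\pi_1(\partial N_i)\to\pi_1(N_i)$.

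Next I would identify these kernels. At $\infty$, the Mumford filling has a section through the smooth locus, so $x_\infty$ dies. The vanishing cycles $K_\infty$ also die, since they contract to the $0$-dimensional strata of the glued $dP_6$. As computed above, the normal closure of $K_\infty$ is $\ker\psi$, so the image of $\Lambda$ is cyclic, generated by the central element $c$, and $x_0x_1=1$. At $0$, $N_0$ is the $\mu_3$-quotient of $X'$, where $X'$ is a trivial family of tori over a disk. Its fundamental group is therefore the extension of $\mu_3$ by $\Lambda_{\mathrm{coinv}}$-type data, with $\tilde\rho$ acting by $T_0$ plus translation by $a_0$. Concretely, the loop $x_0$ in $\partial N_0$ becomes, in $\pi_1(N_0)$, an element whose cube is the translation $3a_0=v_0\in\Lambda$ (this is the content of Lemma \ref{twist}: the section $\sigma_{a_0}$ makes the meridian's third power equal to $v_0$). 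Hence $x_0^3=v_0$ in $\pi_1(Y)$, and likewise $x_1^4=v_1$, modulo $\ker\psi$. Since $\Lambda/\ker\psi\cong\bZ$ via $\psi$ with $c\mapsto 1$, these read $x_0^3=c^m$ and $x_1^4=c^n$. I would also check that the fillings at $0,1$ impose $(T_i-I)\Lambda\to 1$, but that is already contained in $\ker\psi$.

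So $\pi_1(Y)$ is a quotient of the group with generators $x_0,c$, where $c$ is central, and relations $x_0^3=c^m$, $x_0^{-4}=c^n$. This group is abelian, being generated by $x_0$ and a central element. It is the cokernel of $\begin{pmatrix}3&-m\\-4&-n\end{pmatrix}$ (rows of relations in $x_0,c$), with determinant $-3n-4m=-(12\psi(a_1)+12\psi(a_0))=-p$. Hence $|\pi_1(Y)|$ divides $|p|$, so $\pi_1(Y)$ is trivial when $|p|=1$.

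The main obstacle is justifying the relation at the multiple fibers precisely, including the signs and the claim that nothing beyond $x_0^3=v_0$ (mod $\ker\psi$) is imposed. For this I would write $\pi_1(N_0)=\pi_1(X'/\langle\tilde\rho\rangle)$ as the group of lifts of $\tilde\rho$ to $\Lambda\otimes\bR$ (affine maps $\lambda\mapsto T_0\lambda+a_0$), generated by $\Lambda$ and $g:\lambda\mapsto T_0\lambda+a_0$. I would then verify that $g^3$ is translation by $(1+T_0+T_0^2)a_0$. Since $a_0\in\tfrac13\Lambda^{T_0}$, this equals $3a_0=v_0$. Finally, $x_0\mapsto g$ by Lemma \ref{twist}.
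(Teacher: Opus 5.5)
Your proposal is correct and follows essentially the same route as the paper. You write $\pi_1(J)=\Lambda\rtimes\pi_1(U)$ and kill $x_\infty$ and the normal closure $\ker\psi$ of $K_\infty$ at the Mumford fiber; you then impose $x_0^3=c^m$ and $x_1^4=c^n$ from the affine lifts at the multiple fibers, which leaves a cyclic group of order $|4m+3n|=|p|$. Your computation $g^3=t_{(1+T_0+T_0^2)a_0}=t_{3a_0}$ is the paper's relation $\widetilde{\rho}_i^{\,d_i}=t_{v_i+w_i}$ with $w_i=0$ (the paper only needs $\psi(w_i)=0$), and your observation that $\pi_1(Y)$ is a quotient of this group is already enough for triviality.
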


\begin{proof}
The preceding discussion gives
$$
\pi_1(Y)=
\left\langle c,x_0,x_1 \, \big{|} \,
c\text{ central},\,\,\, x_0x_1=1,\,\,\,
x_0^3=c^m,\,\,\, x_1^4=c^n
\right\rangle.
$$
Eliminating $x_1=x_0^{-1}$ makes the group abelian, generated by $x_0$ and
$c$, and the remaining relations reduce this abelian group to a cyclic group 
of order $|4m+3n|=|p|=1$. \end{proof}

\section{The integral homology}\label{sec-homology}

Let
$
f:Y\to\bP^1
$
be the torus fibration and put
$
V={\rm Hom}(\Lambda,\bZ).
$
Let $(e_1^*,e_2^*,e_3^*,e_4^*)$ be the basis of $V$ dual to
$\mathcal B_0=(e_1,e_2,e_3,e_4)$ (see Theorem \ref{matrices}).  The filtration
$
\bZ\delta_0\subset\Lambda^\times\subset\Lambda
$
induces the dual filtration
$$
0\subset W_0\subset W_1\subset W_2=V,
\qquad
W_0=(\Lambda^\times)^\perp,
\qquad
W_1=(\bZ\delta_0)^\perp.
$$
Thus
$
W_0=\bZ e_4^*=\bZ\psi
$
and, since $\delta_0=-2e_1-e_2+3e_3$,
$$
W_1=
\left\langle e_1^*-2e_2^*,\ 3e_2^*+e_3^*,\ e_4^*\right\rangle.
$$
Over $U$, the local system $R^kf_*\bZ$ has fiber
$\wedge^kV$.  Recall that the fibers over $0$, $1$ are multiple
with bielliptic reductions, while the fiber at
$\infty$ is the Mumford construction central fiber of
Section~\ref{sec-initial}.

\begin{proposition}\label{euler-characteristic}
$\chi_{\rm top}(Y)=2$.
\end{proposition}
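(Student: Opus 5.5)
We compute $\chi_{\rm top}(Y)$ via the fibration $f\colon Y\to\bP^1$. Euler characteristic is additive over a stratification and multiplicative for fiber bundles. Over $U=\bP^1\setminus\{0,1,\infty\}$ the fibers are real $4$-tori, whose Euler characteristic is $0$. Hence $\chi_{\rm top}(f^{-1}(U))=\chi(U)\cdot 0=0$. Choose small closed disks around $0,1,\infty$. Each preimage deformation retracts onto the corresponding central fiber. Mayer--Vietoris, or additivity of compactly supported Euler characteristic, then gives
$$\chi_{\rm top}(Y)=\chi(Y_0)+\chi(Y_1)+\chi(Y_\infty).$$
Here $Y_t$ denotes the topological (reduced) fiber. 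The boundary pieces are $T^4$-bundles over circles and contribute $0$.

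\textbf{The fibers over $0$ and $1$.} By Corollary~\ref{biell}, the reduced fibers over $0$ and $1$ are bielliptic surfaces. Topologically, each is a free quotient of a real $4$-torus $F_i$ by $\mu_3$ or $\mu_4$. Euler characteristic is multiplicative under finite free quotients, so both have $\chi=0$. Alternatively, a bielliptic surface is an elliptic fiber bundle over an elliptic curve, which again gives $\chi=0$. The multiplicity plays no role, since only the underlying topological space enters.

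\textbf{The fiber over $\infty$.} By Proposition~\ref{mum-cor}, $Y_\infty=X(u)_\infty$ is $dP_6$ with opposite sides of its anticanonical hexagon glued. Stratify it by torus orbits, as in Figure~\ref{kulikov-fig}:
\begin{itemize}
\item one open $2$-dimensional orbit $(\bC^*)^2$, with $\chi=0$;
\item three $1$-dimensional orbits, each a copy of $\bC^*$, with $\chi=0$;
\item two $0$-dimensional orbits, i.e.\ points, with $\chi=1$ each.
\end{itemize}
This stratification is the $\bZ^2$-quotient of the torus-orbit stratification of the periodic quilt. Its cells correspond to $\bZ^2$-orbits of cells of the tiling $\cT$: one vertex class, three edge classes, two triangle classes. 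These correspond respectively to $2$-, $1$-, and $0$-dimensional strata. Summing gives $\chi(Y_\infty)=2$.

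Equivalently, a Kulikov degeneration of tori has $\chi(Y_\infty)=\#\{\text{triangles in }\cT/\bZ^2\}$, and here the unit square splits into $2$ triangles. As a sanity check, $\chi$ of the glued $dP_6$ equals $\chi(dP_6)-\chi(\text{hexagon})+\chi(\text{image of hexagon})=6-0+(2-3)=$ hmm. Redo the image of the hexagon: its $6$ vertices map to $2$ points and its $6$ edges (each a $\bC^*$, closure a $\bP^1$) map to $3$. The image is a graph-like space with $\chi=2\cdot1+3\cdot0=2$. So the glued surface has $\chi=6-0+2-\ldots$. To avoid confusion I would rely only on the orbit-stratification count, which gives $2$. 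The cross-check via the hexagon should be done carefully, using $\chi(\text{open part})=\chi((\bC^*)^2)=0$.

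\textbf{Conclusion and main obstacle.} Summing the three contributions yields $\chi_{\rm top}(Y)=0+0+2=2$. The main point needing care is justifying the orbit-stratification claim for $Y_\infty$, namely that the $\bZ^2$-action identifies strata exactly as the cells of $\cT/\bZ^2$. This follows from the Mumford construction of Proposition~\ref{mum-cor}, since the quilt's strata are indexed by cells of $\cT$ and the $\bZ^2$-action on the quilt is free.
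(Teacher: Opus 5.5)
Your proposal is correct and is essentially the paper's proof. Additivity of $\chi$ removes the contributions of the torus fibers and the bielliptic fibers, and the torus-orbit stratification of the glued $dP_6$ has exactly two $0$-dimensional strata, so $\chi_{\rm top}(Y)=2$. Your abandoned cross-check also works once you use the correct value $\chi(\text{hexagon})=6$ (a cycle of six $\bP^1$'s), which gives $6-6+2=2$.
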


\begin{proof}
Since a complex $2$-torus and both reduced bielliptic fibers 
have Euler characteristic zero, additivity of the Euler characteristic reduces 
the computation to the central fiber $Y_\infty$ of the Mumford construction.  Its toric
stratification has exactly two zero-dimensional strata.  
Positive-dimensional strata contain a factor $\bC^*$ and hence have
Euler characteristic zero. So
$
\chi_{\rm top}(Y)=\chi_{\rm top}(Y_\infty)=2.
$
\end{proof}

\begin{lemma}\label{invariant-form}
The monodromy preserves the primitive alternating form
$$
\xi=3e_1^*\wedge e_2^*+e_1^*\wedge e_3^*
-2e_2^*\wedge e_3^*-2e_3^*\wedge e_4^*.
$$
Its elementary divisors are $(1,6)$.
\end{lemma}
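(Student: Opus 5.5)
Since $\pi_1(U)$ is generated by $\gamma_0,\gamma_1$ and $T_\infty=(T_0T_1)^{-1}$, it suffices to check invariance under $T_0$ and $T_1$. I would verify both in the single basis $\mathcal B_0$, using $[T_0]_{\mathcal B_0}$ from Theorem~\ref{matrices} and $[T_1]_{\mathcal B_0}$ from (\ref{common-basis}). I would regard $\xi$ as an alternating bilinear form on $\Lambda$ with Gram matrix $A=(\xi(e_i,e_j))$. Its nonzero entries above the diagonal are
$$A_{12}=3,\qquad A_{13}=1,\qquad A_{23}=-2,\qquad A_{34}=-2,$$
and $A_{14}=A_{24}=0$. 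Monodromy invariance means $\xi(Tx,Ty)=\xi(x,y)$, that is, $T^{t}AT=A$ for $T=[T_0]_{\mathcal B_0}$ and $T=[T_1]_{\mathcal B_0}$. This is the same as saying that $\xi\in\wedge^2V$ is fixed by the dual action on $R^2f_*\bZ$.

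For $T_0$, I would check the six pairings $\xi(T_0e_i,T_0e_j)$ for $i<j$ directly from
$$T_0e_1=e_2,\qquad T_0e_2=-e_1-e_2,\qquad T_0e_3=e_3-e_1,\qquad T_0e_4=e_4.$$
For instance, $\xi(T_0e_1,T_0e_2)=\xi(e_2,-e_1-e_2)=\xi(e_1,e_2)=3$. The pairings involving $e_4$ are quick because $e_4$ is fixed and only $e_3^*\wedge e_4^*$ involves $e_4^*$. For $T_1$, I would do the same using the columns of (\ref{common-basis}):
$$T_1e_1=-e_1-e_2+e_3,\quad T_1e_2=e_1+e_3,\quad T_1e_3=-e_1-e_2+2e_3,\quad T_1e_4=2e_1+e_2-e_3+e_4.$$
To reduce the bookkeeping, I could instead check $T_1$ in the basis $\mathcal B_1$, using the pulled-back form $C^{t}AC$ and the very sparse matrix $[T_1]_{\mathcal B_1}$. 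This is the only step with real computation, and hence the main obstacle: a sign error is easy here, particularly in the mixed column $T_1e_4$.

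The elementary divisors are pure linear algebra. The form is primitive because one coefficient is $A_{13}=1$, so the gcd of the entries of $A$ is $1$; this is the first elementary divisor. The Pfaffian is
$$\mathrm{Pf}(A)=A_{12}A_{34}-A_{13}A_{24}+A_{14}A_{23}=3\cdot(-2)-0+0=-6.$$
For a $4\times 4$ alternating matrix with elementary divisors $(d_1,d_2)$ one has $d_1d_2=|\mathrm{Pf}(A)|$, so the divisors are $(1,6)$. As a conceptual sanity check, I would note that $\xi$ should play the role of a degree-$6$ polarization class. This matches the fact that the $\mathbb Z$-cover of $M^\times_t$ built from $6P$ has height $\langle 6P,6P\rangle=6$, but the lemma itself needs only the matrix checks above.
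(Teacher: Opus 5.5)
Your proposal is correct and follows the paper's proof: the paper also verifies $T_i^*\xi=\xi$ directly from the explicit matrices of Theorem~\ref{matrices} and (\ref{common-basis}), and your condition $T^{t}AT=A$ is equivalent to the paper's inverse-transpose action on $\wedge^2V$ (the six $T_1$-pairings do reproduce the entries of $A$, so the feared sign issue does not arise). The paper gets the elementary divisors from $\tfrac{1}{2}\xi\wedge\xi=-6\,e_1^*\wedge e_2^*\wedge e_3^*\wedge e_4^*$, which is exactly your Pfaffian computation combined with primitivity, and your reduction of the $T_\infty$ case to $T_\infty=(T_0T_1)^{-1}$ only makes explicit what the paper leaves implicit.
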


\begin{proof}
Taking inverse transposes of the matrices of Theorem~\ref{matrices}
shows directly that $T_i^*\xi=\xi$ for $i=0,1,\infty$. Computing 
$$\tfrac{1}{2}\xi \wedge \xi=-6e_1^*\wedge e_2^* \wedge e_3^* \wedge e_4^*$$
proves that the elementary divisors are $(1,6)$.
\end{proof}

\begin{lemma}\label{open-invariants}
The monodromy-invariant classes over $U$ are
$$
\begin{aligned}
H^0(U,R^1f_*\bZ)&=\bZ \psi &&(= W_0),\\
H^0(U,R^2f_*\bZ)&=\bZ\xi, \\
H^0(U,R^3f_*\bZ)&=\bZ\theta && (= \wedge^3W_1),
\end{aligned}
$$
where
$
\theta\coloneqq\xi\wedge \psi$.
\end{lemma}

\begin{proof}
The first equality is Lemma \ref{covect}. The invariance
of $\xi$ is Lemma \ref{invariant-form}, and the invariance
of $\theta = \xi\wedge\psi$ follows. Note that $\wedge^3W_1$ is also
monodromy invariant by general principle. Performing a linear algebra check with
the matrices  $[T_0]_{\cB_0}$ and $[T_1]_{\cB_0}$ of Theorem \ref{matrices}
and (\ref{common-basis}), one can also directly compute
that the classes $\psi$, $\xi$, $\theta$ integrally generate the 
monodromy invariants in $\wedge^kV$,
for $k=1,2,3$, respectively.
\end{proof}

We next determine what multiples of these invariant classes extend over the three
singular fibers.

\begin{lemma}\label{specialization-lattices}
Suppose $|p|=1$.  For $i=0,1$, let
$\pi_i:F_i\to G_i$ be the unramified cover from the smooth good reduction
fiber to the reduced bielliptic fiber.  Write
$$
{\rm sp}_i^k\coloneqq\pi_i^*:H^k(G_i,\bZ)\to
H^k(F_i,\bZ)^{T_i}=(\wedge^kV)^{T_i}.
$$
Then
\begin{align}\label{table}
\begin{aligned}
[(\wedge^kV)^{T_i}:{\rm im}({\rm sp}_i^k)]
=
\begin{array}{c|ccc}
 i\backslash k&1&2&3\\ \hline
0&3&1&1\\
1&4&2&2
\end{array}
\end{aligned}\end{align}
with cyclic cokernel, generated by the images
of $\psi$, $\xi$, $\theta$ in respective degrees $k=1$, $2$, $3$.
At $\infty$, the specialization map is an isomorphism onto
$(\wedge^kV)^{T_\infty}$ for degrees $k=1$, $2$, $3$.
\end{lemma}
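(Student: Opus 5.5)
The approach is to compute the specialization maps at $0$ and $1$ through the Hochschild--Serre (Cartan--Leray) description of the cohomology of $G_i=F_i/\mu$, and to treat $\infty$ via the Clemens--Schmid sequence for the Kulikov degeneration. The finite cyclic group is $\mu=\langle\widetilde\rho\rangle$ of order $3$ or $4$. It acts on $F_i$ by $x\mapsto \rho(x)+a_i$, with linear part $T_i$ (in the marking of Theorem~\ref{matrices}) and translation part $a_i$. Since $F_i$ is a real $4$-torus, $G_i$ is a flat manifold with $\pi_1(G_i)=\Gamma_i$, where $\Gamma_i$ is the extension of $\mu$ by $\Lambda$ generated by $\Lambda$ and the affine map $(T_i,a_i)$. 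Therefore
$$H^*(G_i,\bZ)=H^*(\Gamma_i,\bZ),$$
and $\pi_i^*$ is restriction to $\Lambda$.

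\textbf{Degree $1$.} Here the image of restriction ${\rm Hom}(\Gamma_i,\bZ)\to V^{T_i}$ consists of the invariant covectors $\lambda$ that extend to a homomorphism on $\Gamma_i$. Such an extension must send the generator $g=(T_i,a_i)$ to some $x$ with $|\mu|\,x=\lambda(g^{|\mu|})=\lambda(|\mu|a_i)$. I would first identify $V^{T_i}$ from the matrices: it is rank $2$ in the $e^*$-coordinates. I would then show that the only obstruction is $\psi$. The hypothesis $|p|=1$ forces $\psi(a_0)\in\tfrac13\bZ\setminus\bZ$ and $\psi(a_1)\in\tfrac14\bZ\setminus\tfrac12\bZ$, which is what makes the index exactly $3$, respectively $4$. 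The other invariant direction extends: I would choose $a_i$ with its $\ker\psi$ component in $\Lambda^{T_i}$, after using the freedom $a_i\mapsto a_i+\Lambda^{T_i}$ and the fact that $\psi(\Lambda^{T_i})$ contains $1$. So the cokernel is $\bZ/3$ or $\bZ/4$, generated by $\psi$.

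\textbf{Degrees $2$ and $3$.} I would use the spectral sequence
$$E_2^{a,b}=H^a(\mu,\wedge^bV)\Rightarrow H^{a+b}(G_i).$$
The image of ${\rm sp}^k_i$ is the edge map into $E_2^{0,k}=(\wedge^kV)^{T_i}$, that is, the classes surviving all differentials $d_r\colon E^{0,k}\to E^{r,k-r+1}$. Rather than compute differentials directly, I would compute $H^k(G_i,\bZ)$ and its torsion using the Bagnera--de Franchis types from Corollary~\ref{biell} and the known integral cohomology of bielliptic surfaces, and compare ranks. The cleaner route for the index is Poincaré duality combined with transfer. Since $\pi_{i*}\pi_i^*=|\mu|$ and $\pi_i^*$ commutes with cup product, I would construct explicit preimages: products of degree-$1$ classes from $G_i$, and pullbacks of Kähler and bielliptic-fibration classes. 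For example, the pullback of the fiber class of the Albanese map $G_i\to E'/\mu$ gives an invariant $2$-class.

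Showing that $\xi$ itself lies in the image at $0$, but only $2\xi$ does at $1$, I expect to be the main obstacle. I would try to settle it as follows. The canonical bundle of $G_i$ is torsion of order $3$ or $4$, and $\xi$ is (up to sign) the polarization-type invariant class of Lemma~\ref{invariant-form}. I would test membership through the Hochschild--Serre differential $d_2(\xi)\in H^2(\mu,V)$, which is computable: it is the cup product of $\xi$ with the extension class of $\Gamma_i$ determined by $a_i$. At $0$ this differential vanishes, because $H^2(\mu_3,V)$ has no room in the relevant direction. At $1$ it has order $2$, consistent with the $\mu_2$ factor in type $\mu_2\times\mu_4$. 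Degree $3$ then follows from degree $2$ by cupping with $\psi$ and using the degree-$1$ result. In degree $3$ I would dualize via Poincaré duality on $G_i$ and $F_i$, using that $\pi_{i*}$ is surjective onto $H_1$-type groups up to the same indices.

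\textbf{At $\infty$.} The Mumford fiber $Y_\infty$ is a deformation retract of a neighborhood, so the specialization map $H^k(Y_\infty)\to H^k(F_t)^{T_\infty}$ is the one appearing in the Clemens--Schmid sequence. This sequence holds integrally for Mumford degenerations with $\bZ^2$-periodic unimodular triangulations, since the fan is smooth. Its exactness $H^k(Y_\infty)\to H^k(F_t)\xrightarrow{N}H^k(F_t)$ gives surjectivity onto the invariants. For injectivity I would compute $H^k(Y_\infty)$ for $k\le 3$ from the toric stratification of the glued $dP_6$ (one $dP_6$, three $\bC^*$-circle strata, two points), equivalently from the weight spectral sequence. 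Then I would check that the ranks match $(\wedge^kV)^{T_\infty}$ and that there is no torsion, which follows from the unimodular, standard affine tiling.
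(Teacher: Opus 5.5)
Your degree-$1$ argument matches the paper's. An invariant covector $\lambda$ extends over $\pi_1(G_i)$ if and only if $d_i\mid\lambda(v_i+w_i)$; here one should allow a translation part $w_i\in\ker\psi$ in $\widetilde\rho_i^{\,d_i}$. Since $\psi(v_i)$ is a unit mod $d_i$, the map $\lambda\mapsto\lambda(v_i+w_i)\bmod d_i$ is onto $\bZ/d_i\bZ$, and its kernel is the image. So your normalization of $a_i$ is unnecessary.

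\textbf{The gap is in degree $3$.} The step ``degree $3$ follows from degree $2$ by cupping with $\psi$'' fails. At $i=0$ you have ${\rm im}({\rm sp}^1_0)=\langle e_3^*,3e_4^*\rangle$, since only $3\psi$ descends, as you yourself show. The degree-$3$ cup products of descended classes are therefore spanned by $e_3^*\wedge\xi=3e_1^*\wedge e_2^*\wedge e_3^*$ and $3\psi\wedge\xi=3\theta$. That is an index-$9$ sublattice of $(\wedge^3V)^{T_0}=\langle e_1^*\wedge e_2^*\wedge e_3^*,\theta\rangle$, whereas the lemma asserts index $1$. Moreover, cup products can only exhibit classes that \emph{do} descend, so they can never show that $\theta$ fails to descend at $i=1$.

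The spectral sequence does not repair this. Because $d_2\psi\neq0$ in $H^2(\mu,\bZ)$, $\psi$ dies at $E_3$, so the Leibniz rule determines at most $d_2\theta$. But $E_2^{0,3}$ also supports $d_3$ and $d_4$ into subquotients of $H^3(\mu,V)$ and $H^4(\mu,\bZ)\simeq\bZ/d_i\bZ$, which you do not control. Your closing remark about dualizing points the right way, but as written it is not an argument.

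\textbf{The missing step is the paper's determinant count.} The cup pairing $H^1(G_i,\bZ)\times H^3(G_i,\bZ)\to\bZ$ is unimodular modulo torsion, and $\pi_i^*$ multiplies it by $d_i$. Hence ${\rm im}({\rm sp}^1_i)$ and ${\rm im}({\rm sp}^3_i)$ pair with determinant $\pm d_i^2$. The explicit pairing of $V^{T_i}$ with $(\wedge^3V)^{T_i}$ has determinant $3$, resp.\ $2$, and the degree-$1$ index is $d_i$. This gives $3\cdot3\cdot b_0=9$ and $4\cdot2\cdot b_1=16$. That $\theta$ generates the cokernel at $1$ follows by pairing it with a descended class $e_2^*+e_3^*-\ell e_4^*$, which gives $2\not\equiv0\bmod 4$. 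Equivalently, $\pi_i^*$ on $H^3$ is Poincar\'e dual to the transfer $H_1(G_i,\bZ)\to\Lambda$. Its image is $N\Lambda+\bZ(v_i+w_i)$ with $N=\sum_jT_i^j$, which has index $1$, resp.\ $2$, in $\Lambda^{T_i}$.

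\textbf{Degree $2$ and $\infty$.} Your $d_2$ route in degree $2$ is a legitimate alternative to the paper's discriminant count plus parity argument ($\xi^2/4=-3$ against the even form on $G_1$), and your predicted values are correct. You must, however, apply it to all of $(\wedge^2V)^{T_i}$, not just to $\xi$. Since $E_2^{3,0}=H^3(\mu,\bZ)=0$, the image is exactly $\ker d_2$. Contraction with $[v_i+w_i]\in\Lambda^{T_i}/N\Lambda$ kills both generators at $0$, landing in $NV=\langle e_3^*,3e_4^*\rangle$. At $1$ it sends $\xi$ and $(e_2^*+e_3^*)\wedge e_4^*$ to the same element of order $2$. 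With this, no explicit geometric preimages are needed. Finally, Clemens--Schmid is a rational statement. At $\infty$, integral surjectivity onto $(\wedge^kV)^{T_\infty}$ must therefore come out of the toric Mayer--Vietoris computation itself, as the paper indicates, rather than be quoted.
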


\begin{proof}
The local invariant lattices are
\begin{align}\label{some-bases}
\begin{aligned}
V^{T_0}&=\langle e_3^*,e_4^*\rangle,
& (\wedge^2V)^{T_0}&=\langle e_3^*\wedge e_4^*,\xi\rangle,\\
V^{T_1}&=\langle e_2^*+e_3^*,e_4^*\rangle,
& (\wedge^2V)^{T_1}
&=\langle(e_2^*+e_3^*)\wedge e_4^*,\xi\rangle.
\end{aligned}
\end{align}

Let $d_0=3$ and $d_1=4$. The affine
group $\pi_1(G_i)$ is generated by the translations $t_\lambda$ and by
a lift $\widetilde{\rho}_i$ 
of $\widetilde{\rho}$ (see Construction \ref{con-log}) to the universal cover $\bC^2\to F_i$. 
More precisely, choose $\rho_i$ to lift the order $d_i$ deck action on $L^\times$ over $X'_i$ and
set $\widetilde{\rho}_i=t_{a_i}\circ\rho_i$. We have
\begin{align}\label{pi1-ref}
\widetilde{\rho}_i t_\lambda\widetilde{\rho}_i^{\,-1}=t_{T_i^{-1}\lambda},
\qquad
\widetilde{\rho}_i^{\,d_i}=t_{v_i}\circ \rho_i^{d_i} = t_{v_i+w_i} 
\end{align}
where $v_i=d_ia_i$ and $w_i\in \Lambda^{T_i}$ is defined by
$\rho_i^{d_i}=t_{w_i}$.
Thus
$$
{\rm im}({\rm sp}_i^1)=
\{\alpha\in V^{T_i}:d_i\mid\alpha(v_i+w_i)\}.
$$
Since $|p|=1$, $e_4^*(v_0)=m$ is a unit modulo $3$ and
$e_4^*(v_1)=n$ is a unit modulo $4$.  We have $e_4^*(w_i)=\psi(w_i)=0$
because $w_i\in \pi_1(L^\times)=\ker(\psi)$. Evaluation on $v_i+w_i$
therefore gives an isomorphism
$$
V^{T_i}/{\rm im}({\rm sp}_i^1)\simeq\bZ/d_i\bZ,
$$
generated by the class of $e_4^*$. The first column of (\ref{table})
follows.

The indices in degree $2$ follow from computing the intersection pairings
on $H^2(F_i,\bZ)$ and $H^2(G_i,\bZ)$.  
The Gram matrices of the intersection pairing 
of the bases (\ref{some-bases}) of $H^2(F_i,\bZ)^{T_i}$ are
$$
\begin{pmatrix}0&3\\3&-12\end{pmatrix},
\qquad
\begin{pmatrix}0&2\\2&-12\end{pmatrix}.
$$
The Bagnera--de Franchis types of $G_i$ 
ensure that $H^2(G_i,\bZ)$ is free of rank $2$
\cite[Rem.~1.6]{Ser90}.
By Poincar\'e duality, the intersection pairing on its middle cohomology is
unimodular.  

Write $(\pi_i)_*$ for proper pushforward.  It satisfies
$(\pi_i)_*\pi_i^*=d_i$ and so every ${\rm sp}_i^k$ is
injective.  Since pullback multiplies the intersection 
pairing by $d_i$, comparison of determinants gives
$$
[(\wedge^2V)^{T_0}:{\rm im}({\rm sp}_0^2)]=1,
\qquad
[(\wedge^2V)^{T_1}:{\rm im}({\rm sp}_1^2)]=2.
$$
The latter cokernel is generated by $\xi$: if
$\xi$ descended at the order $4$ fiber, its square downstairs would be
$\xi^2/4=-3$, but the intersection form of $G_1$ is even (as $K_{G_1}\sim_\bQ 0$).
The second column of (\ref{table}) follows.

Finally, we make the
degree $3$ cohomology computation. The local invariant lattices in 
degree $3$ are
$$
\begin{aligned}
(\wedge^3V)^{T_0}
&=\left\langle e_1^*\wedge e_2^*\wedge e_3^*,\ \theta\right\rangle,\\
(\wedge^3V)^{T_1}
&=\left\langle
e_1^*\wedge e_2^*\wedge e_3^*
-e_2^*\wedge e_3^*\wedge e_4^*,\ \theta
\right\rangle.
\end{aligned}
$$
Pairing these bases with the respective bases of $V^{T_i}$ in
(\ref{some-bases}) gives
$$
\begin{pmatrix}0&3\\-1&0\end{pmatrix},
\qquad
\begin{pmatrix}0&2\\-1&0\end{pmatrix}.
$$
Let
$
b_i=[(\wedge^3V)^{T_i}:{\rm im}({\rm sp}_i^3)].
$
The pairing between $H^1(G_i,\bZ)$ and $H^3(G_i,\bZ)$ is unimodular,
whereas pullback to $F_i$ multiplies it by $d_i$.  Since both groups have rank $2$,
the determinant of the pulled-back pairing is therefore $d_i^2$.
On the other hand, replacing the two invariant lattices by sublattices of
indices $d_i$ and $b_i$ multiplies the determinant by $d_ib_i$.
The two displayed pairing matrices therefore give
$$
3\cdot b_0\cdot3=3^2,
\qquad
4\cdot b_1\cdot2=4^2.
$$
Thus $b_0=1$ and $b_1=2$.  Finally, the description of
${\rm im}({\rm sp}_1^1)$ allows us to choose $\ell\in\bZ$ so that
$e_2^*+e_3^*-\ell e_4^*$ descends to $G_1$.  Its pairing with $\theta$
is $2$.  If $\theta$ also descended, this pairing would be divisible by
$d_1=4$, a contradiction.  Hence the degree $3$ cokernel at $1$ is
generated by the class of $\theta$.  This proves the third column of
(\ref{table}).

At $\infty$, we have ${\rm im}\,N_\infty = \ker N_\infty = \ker(T_\infty-I)$, where $N_\infty:=T_\infty-I$.
Indeed, the $q$-order matrix
$\left(\begin{smallmatrix}1&0\\0&1\end{smallmatrix}\right)\in \GL_2(\bZ)$
of Proposition \ref{mum-cor} is unimodular.
The last sentence of the lemma follows for $k=1$. 
A computation of the Mayer--Vietoris spectral sequence for the toric stratification of 
$Y_\infty$ shows that the specialization maps to the monodromy invariant classes
are also isomorphisms for $k=2,3$.
\end{proof}

We now take cohomology on the base.

\begin{lemma}\label{pushforward-cohomology}
Suppose $|p|=1$.  Then
$$
\begin{aligned}
H^0(\bP^1,R^1f_*\bZ)&=12\bZ\psi,\\
H^0(\bP^1,R^2f_*\bZ)&=2\bZ\xi,\\
H^0(\bP^1,R^3f_*\bZ)&=2\bZ\theta,
\end{aligned}
$$
and
$
H^1(\bP^1,R^1f_*\bZ)=
H^1(\bP^1,R^2f_*\bZ)=0.
$
If $H^2(\bP^1,\bZ)=\bZ \omega$, then
$$
\begin{aligned}
H^2(\bP^1,R^0f_*\bZ)&=\bZ\omega,\\
H^2(\bP^1,R^1f_*\bZ)&=
\bZ\omega[e_1^*-e_2^*],\\
H^2(\bP^1,R^2f_*\bZ)&=
\bZ\omega[(e_1^*-e_2^*)\wedge\psi].
\end{aligned}
$$
\end{lemma}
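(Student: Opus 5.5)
We compute the cohomology of the sheaves $R^kf_*\bZ$ on $\bP^1$ by comparing each with its restriction to $U$. Let $j\colon U\hookrightarrow\bP^1$ be the inclusion. For $k\le 3$, the stalk of $R^kf_*\bZ$ at a singular point $s\in\{0,1,\infty\}$ is $H^k(Y_s,\bZ)$. By Lemma \ref{specialization-lattices}, the map from this stalk to $(\wedge^kV)^{T_s}$ is injective; it is an isomorphism at $\infty$, and has finite cyclic cokernel of the tabulated index at $0$ and $1$. This gives, for $k=1,2,3$, a short exact sequence
$$0\to R^kf_*\bZ\to j_*j^*R^kf_*\bZ\to Q_k\to 0,$$
where $Q_k$ is a skyscraper sheaf supported at $0$ and $1$, with stalks $\bZ/3\oplus\bZ/4$ for $k=1$, and $0\oplus\bZ/2$ for $k=2,3$. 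Injectivity of the $\pi_i^*$ in degree $k$ is exactly what is needed at $0$ and $1$; torsion in $H^k(G_i,\bZ)$ needs a separate check, which I would take from \cite{Ser90}. The long exact sequence then reduces everything to the cohomology of $j_*\mathbb{L}$ for the local system $\mathbb{L}=\wedge^kV$, together with the connecting maps into $Q_k$.

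\emph{Global sections.} $H^0(\bP^1,j_*\mathbb{L})=H^0(U,\mathbb{L})$, which Lemma \ref{open-invariants} gives as $\bZ\psi$, $\bZ\xi$, $\bZ\theta$. A global section of $R^kf_*\bZ$ is an invariant class whose germ at each point lies in the image of the specialization maps. By the cyclicity statement in Lemma \ref{specialization-lattices}, the generator maps to a generator of each cokernel. For $k=1$ we need $3\mid a$ and $4\mid a$, so $12\bZ\psi$. For $k=2,3$ only the point $1$ imposes a condition, which is divisibility by $2$. This gives $12\bZ\psi$, $2\bZ\xi$, $2\bZ\theta$, and it also shows that $H^0(j_*\mathbb{L})\to H^0(Q_k)$ is surjective exactly when the cokernel is generated by the global invariant. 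This holds for $k=2,3$, where $Q_k=\bZ/2$ at $1$. For $k=1$ it holds because $\bZ\to\bZ/3\oplus\bZ/4\cong\bZ/12$ is onto.

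\emph{$H^1$ and $H^2$ of $j_*\mathbb{L}$.} I would use the standard description of these groups as parabolic cohomology of the free group $\pi_1(U)=\langle\gamma_0,\gamma_1\rangle$. Concretely, $H^1(\bP^1,j_*\mathbb{L})$ is the group of crossed homomorphisms $\pi_1(U)\to\mathbb{L}$ whose restriction to each $\langle\gamma_s\rangle$ is a coboundary, modulo global coboundaries. For $H^2$, I would use the Euler characteristic
$$\chi(\bP^1,j_*\mathbb{L})=2\,\mathrm{rk}\,\mathbb{L}^{\pi_1}-\mathrm{rk}\,\mathbb{L}+\sum_s\mathrm{rk}\,\mathbb{L}^{T_s}.$$
I would then identify $H^2$ integrally as the coinvariants $\mathbb{L}_{\pi_1(U)}$ modulo the images of $(1-T_s)$-type local terms. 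Equivalently, by Verdier duality, $H^2(\bP^1,j_*\mathbb{L})\cong H_0$ of the dual local system, that is, the coinvariants of $\mathbb{L}$ under the subgroup generated by the $(T_s-1)\mathbb{L}$.

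For $k=1$ this means computing $V/\big((T_0-1)V+(T_1-1)V\big)$ with the matrices of Theorem \ref{matrices} and (\ref{common-basis}). I expect this to be $\bZ$, generated by the class of $e_1^*-e_2^*$. The rank bookkeeping for $k=1$ is: invariant rank $1$, ambient rank $4$, local invariant ranks $2,2,2$. This gives $\chi=2-4+6=4$, which is consistent with $h^0=1$, $h^1=0$, $h^2$ of rank $1$ only after accounting for this duality. The same recipe on $\wedge^2V$ should yield $\bZ$, generated by $(e_1^*-e_2^*)\wedge\psi$. $H^2$ of $R^0f_*\bZ=\bZ$ is $\bZ\omega$ trivially. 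Since $Q_k$ is a skyscraper, $H^2(R^kf_*\bZ)=H^2(j_*\mathbb{L})$.

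\emph{Vanishing of $H^1$.} From the long exact sequence,
$$H^1(R^kf_*\bZ)\cong \mathrm{coker}\big(H^0(j_*\mathbb{L})\to H^0(Q_k)\big)\oplus\ker\text{-part of }H^1(j_*\mathbb{L}).$$
By the surjectivity established above, the first term vanishes for $k=1,2$. It therefore remains to show $H^1(\bP^1,j_*\mathbb{L})=0$ for $\mathbb{L}=V$ and $\wedge^2V$.

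I expect this to be the main obstacle: an integral computation of parabolic $H^1$. I would attack it directly. A cocycle is determined by $(c_0,c_1)\in\mathbb{L}^2$, with the local conditions $c_0\in(T_0-1)\mathbb{L}$, $c_1\in(T_1-1)\mathbb{L}$, and $c_\infty\in(T_\infty-1)\mathbb{L}$, where $c_\infty$ is the value on $\gamma_\infty=(\gamma_0\gamma_1)^{-1}$. After subtracting a coboundary, I would normalize $c_0=0$. The remaining conditions become
$$c_1\in(T_1-1)\mathbb{L}\cap\big((T_0-1)\mathbb{L}+(T_\infty-1)\text{-twisted}\big).$$
Showing that this forces $c_1$ to be a global coboundary is a finite lattice computation with the explicit $4\times4$ matrices (and their $6\times6$ exterior squares). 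As a rational sanity check, the rank of parabolic $H^1$ is $\chi$-determined: it equals $-\chi+h^0+h^2$, which should be $0$. The remaining integral torsion issue would be settled by the Smith normal form of the stacked matrix $\big[(T_0-1)\;\;(T_1-1)\big]$.
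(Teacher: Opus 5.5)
\textbf{There is a gap at the vanishing of $H^1(\bP^1,j_*\mathbb L)$.} Your outline is otherwise the paper's: the sequence $0\to R^kf_*\bZ\to j_*(\wedge^kV)\to Q^k\to0$, $H^0$ by intersecting the specialization lattices, $H^2(\bP^1,j_*\mathbb L)=H^2_c(U,\mathbb L)=\mathbb L_{\pi_1(U)}$ (coinvariants of $\mathbb L$ itself, not of its dual), and the reduction of $H^1$ to $H^1(\bP^1,j_*\mathbb L)$ via surjectivity of $H^0(j_*\mathbb L)\to H^0(Q^k)$. The missing step is the one with real content, $H^1(\bP^1,j_*\mathbb L)=0$ for $\mathbb L=V,\wedge^2V$. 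You leave it as ``a finite lattice computation'', and both tools you offer for it are misstated.

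First, your Euler characteristic is wrong. From $0\to j_!\mathbb L\to j_*\mathbb L\to\bigoplus_s\mathbb L^{T_s}\to0$ and $\chi_c(U,\mathbb L)=-\mathrm{rk}\,\mathbb L$, one gets $\chi(\bP^1,j_*\mathbb L)=-\mathrm{rk}\,\mathbb L+\sum_s\mathrm{rk}\,\mathbb L^{T_s}$, with no $2\,\mathrm{rk}\,\mathbb L^{\pi_1}$ term. So $\chi=2$ for both $V$ and $\wedge^2V$, whose local invariant ranks are $2,2,2$ and $2,2,4$. Your value $\chi=4$ makes your own check return $h^1=-2$, and the appeal to ``duality'' does not repair that.

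Second, your cocycle condition is misstated. After normalizing $c_0=0$ one has $c(\gamma_\infty)=-T_\infty T_0c_1$. The condition is therefore $c_1\in(T_1-1)\mathbb L\cap T_0^{-1}(T_\infty-1)\mathbb L$, taken modulo $(T_1-1)\mathbb L^{T_0}$. No $(T_0-1)\mathbb L$ summand appears.

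There are two ways to close this. The paper normalizes cocycles on $\gamma_1$ instead, which gives $H^1(\bP^1,j_*\mathbb L)\cong\bigl((T_0-1)\mathbb L\cap(T_\infty-1)\mathbb L\bigr)/(T_0-1)\mathbb L^{T_1}$. With $T_s$ acting contragrediently on $V$, numerator and denominator are both $\bZ(e_1^*+e_2^*+e_3^*)$ for $V$, and both $\bZ\bigl((e_1^*+e_2^*+e_3^*)\wedge\psi\bigr)$ for $\wedge^2V$.

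Alternatively, your Smith-normal-form idea works once it is made precise:
\begin{itemize}
\item The corrected $\chi$ together with $h^0=h^2=1$ shows that $H^1(\bP^1,j_*\mathbb L)$ has rank $0$.
\item By the Leray sequence for $j$, it injects into $H^1(U,\mathbb L)=\operatorname{coker}\bigl(x\mapsto((T_0-1)x,(T_1-1)x)\bigr)$.
\item Transposing, the torsion of that cokernel equals the torsion of the coinvariants of $\mathbb L^\vee$.
\item For $\mathbb L=V$ these are $\Lambda/\langle e_1,e_2,e_3\rangle\cong\bZ$. For $\wedge^2V$ they are $(\wedge^2\Lambda)_{\pi_1}\cong(\wedge^2V)_{\pi_1}\cong\bZ$, using $\wedge^2\Lambda\cong\wedge^2V$ since $\det T_s=1$.
\end{itemize}
So the Smith form you need is that of $\mathbb L\to\mathbb L^2$, equivalently the coinvariant presentation of $\mathbb L^\vee$, not of $[(T_0-1)\;(T_1-1)]$ on $\mathbb L$. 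This route replaces the paper's explicit lattice intersection with the coinvariant computation you need for $H^2$ anyway.
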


\begin{proof}
Intersecting the three specialization lattices of
Lemma~\ref{specialization-lattices} inside the invariant lattices of
Lemma~\ref{open-invariants} gives the asserted $H^0$ groups.

For
$j:U\hookrightarrow\bP^1$ there is an exact sequence
\begin{align}\begin{aligned}\label{exact-01}
&\,\,\,0\to R^kf_*\bZ\to j_*(\wedge^kV)\to Q^k\to0, \textrm{ where } \\
Q^1&=(\bZ/3\bZ)_0\oplus (\bZ/4\bZ)_1 \textrm{ and }
Q^2=Q^3=(\bZ/2\bZ)_1.
\end{aligned}\end{align}
Local-coefficient duality on the pair of pants gives
$$
H^2(\bP^1,j_*L)=L_{\langle T_0,T_1\rangle}
$$
where the subscript denotes coinvariants.
Row reduction gives
$$
\begin{aligned}
V_{\langle T_0,T_1\rangle}
&=\bZ[e_1^*-e_2^*],\\
(\wedge^2V)_{\langle T_0,T_1\rangle}
&=\bZ[(e_1^*-e_2^*)\wedge\psi],\\
[\xi]&=12[(e_1^*-e_2^*)\wedge\psi].
\end{aligned}
$$
Since the skyscraper sheaf $Q^k$ has no higher cohomology, this proves
the assertions about $H^2$.

Finally we check the vanishing of the $H^1$ groups.
We have a surjection $H^0(\bP^1, j_*(\wedge^kV))\to H^0(\bP^1, Q^k)$, for 
$k=1,2$---the classes $\psi$, $\xi$ map to generators of $H^0(\bP^1, Q^1) = \bZ/3\bZ\oplus \bZ/4\bZ$,
$H^0(\bP^1, Q^2) = \bZ/2\bZ$, respectively, by Lemma \ref{specialization-lattices}. 
From the long exact sequence
of (\ref{exact-01}), it suffices to prove vanishing of 
$H^1(\bP^1, j_*(\wedge^kV))$.
 For
$L=V$ and $\wedge^2V$, respectively, this group is identified with
$$
\frac{(T_0-I)L\cap(T_\infty-I)L}{(T_0-I)(L^{T_1})}
$$
if we normalize cocycles to vanish on $\gamma_1\in \pi_1(U)$.
For $L=V$ the numerator and denominator are
$\bZ(e_1^*+e_2^*+e_3^*)$; for $L=\wedge^2V$ they are
$\bZ((e_1^*+e_2^*+e_3^*)\wedge\psi)$.  Thus both groups
vanish.
\end{proof}

We now compute the differentials $d_2$ on the $E_2$-page of the Leray
spectral sequence that contribute in total degree at most $3$.

\begin{lemma}\label{transgression-calculation}
We have
$$
d_2(12\psi)=p\omega,\quad
d_2(2\xi)=-p\omega[e_1^*-e_2^*],\quad
d_2(2\theta)=p\omega[(e_1^*-e_2^*)\wedge\psi].
$$
\end{lemma}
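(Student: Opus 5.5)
The plan is to compute the Leray differential $d_2\colon E_2^{0,k}\to E_2^{2,k-1}$ by interpreting it as an obstruction, i.e.\ a transgression or Euler class. For a torus fibration with a section over $U$ and degenerate fibers at $0,1,\infty$, $d_2$ is computed by cup product with the ``Chern class'' of the fibration. Equivalently, it is the class in $H^2(\bP^1,j_*\Lambda)$ measuring the failure of a global section after the logarithmic twists. Since $X(u)\to\bP^1$ has a section (Proposition~\ref{section-of-J}), its extension class vanishes. The twisted fillings at $0$ and $1$ then change this class by the local contributions $a_0\in\tfrac13\Lambda^{T_0}$ and $a_1\in\tfrac14\Lambda^{T_1}$. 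So the first step is to establish that $d_2\colon H^0(\bP^1,R^kf_*\bZ)\to H^2(\bP^1,R^{k-1}f_*\bZ)$ is contraction (interior product) with an extension class
$$\eta\in H^2(\bP^1,R^{3}f_*\bZ)^{\vee}\text{-type group }\;\simeq\;\Lambda_{\langle T_0,T_1\rangle}\otimes\bQ,$$
represented at the level of local data by $a_0+a_1$ (with the sign convention fixed by orientations). This is the standard fact that for a principal torus bundle the Leray $d_2$ is contraction with the Chern class, and the log transform changes the Chern class by the multiplicity class \cite[\S 5.3]{EFGMS25}.

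Concretely, I would verify the formula first in degree $k=1$. The class $12\psi\in H^0(\bP^1,R^1f_*\bZ)$ is a global invariant covector, which yields a fibration-wise map to $\bR/\bZ$ defined over $U$. By Lemma~\ref{specialization-lattices} it extends across the multiple fibers after multiplying by $3$ and $4$. The obstruction to integrating it to a global map $Y\to\bR/\bZ$, namely $d_2(12\psi)\in H^2(\bP^1,\bZ)=\bZ\omega$, is its evaluation on the twisting data. Near $0$, the local section $\sigma_{a_0}$ of Lemma~\ref{twist} winds $\psi(v_0)=m$ times around the circle, weighted by $12/3=4$. Near $1$ it winds $n$ times, weighted by $12/4=3$. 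At $\infty$ there is no contribution, since the section extends. Summing gives $4m+3n=12\psi(a_0+a_1)=p$, consistent with Proposition~\ref{fundamental-group}, where the same quantity appeared. I would carry this out as a Čech computation: trivialize over $U$ and the three disks, and compare the sections on the annuli.

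For $k=2,3$, I would use multiplicativity of the Leray spectral sequence. In the (rationalized) spectral sequence $d_2$ is a derivation. Moreover $d_2$ is $\bZ$-linear and contraction by a fixed class $\eta$ representing $a_0+a_1$ in $\Lambda_{\langle T_0,T_1\rangle}$. So $d_2(2\xi)=2\,\iota_\eta\xi$ and $d_2(2\theta)=2\,\iota_\eta(\xi\wedge\psi)=2(\iota_\eta\xi)\wedge\psi+2\,\xi\,\psi(\eta)$. I would then reduce these to the coinvariant generators of Lemma~\ref{pushforward-cohomology}, using $[\xi]=12[(e_1^*-e_2^*)\wedge\psi]$ and the row-reduced coinvariants. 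Only $\psi(\eta)=\psi(a_0+a_1)=p/12$ should survive modulo the relations. This should give $\iota_\eta\xi\equiv -\tfrac{p}{2}[e_1^*-e_2^*]$ and the stated values. A good check is the example $a_0=\tfrac13e_4$, $a_1=-\tfrac14 f_4=-\tfrac14(e_1+e_4)$, which I would compute explicitly.

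The main obstacle is the first step. Rigorously identifying $d_2$ with contraction against the twisting class requires care, because the sheaves $R^kf_*\bZ$ are not locally constant at $0,1,\infty$, and because the classes $12\psi$, $2\xi$ and $2\theta$ only extend after the multiplicities are absorbed. I would handle this by working on $\bP^1$ with the orbifold structure at $0,1$, where after base change everything is a genuine torus bundle with section locally. There the twist is a Čech $1$-cocycle with values in the sheaf of local sections. The signs, in particular the minus sign in degree $2$, must be tracked through the contraction and the orientation conventions of Theorem~\ref{matrices}.
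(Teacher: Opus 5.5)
Your degree-one argument is the paper's argument. The local contributions $12\psi(v_0)/3=4m$ and $12\psi(v_1)/4=3n$ come from $\widetilde\rho_i^{\,d_i}=t_{v_i+w_i}$, and there is nothing at $\infty$ because the section extends.

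In degrees $2$ and $3$ you take a genuinely different route, and it does land on the stated values. You identify $d_2$ with $\omega\cdot\iota_\eta$ for a twisting class $\eta\in\Lambda_{\langle T_0,T_1\rangle}\otimes\bQ$. Since $(T_0-I)\Lambda+(T_1-I)\Lambda=\langle e_1,e_2,e_3\rangle=\ker\psi$, this forces $\eta\equiv\tfrac{p}{12}e_4$. Then:
\begin{itemize}
\item $\iota_{e_4}\xi=2e_3^*$, and $[e_3^*]=-3[e_1^*-e_2^*]$ in $V_{\langle T_0,T_1\rangle}$, so $d_2(2\xi)=-p\,\omega[e_1^*-e_2^*]$;
\item $\iota_{e_4}\theta=2e_3^*\wedge\psi+\xi\equiv 6[(e_1^*-e_2^*)\wedge\psi]$, so $d_2(2\theta)=p\,\omega[(e_1^*-e_2^*)\wedge\psi]$.
\end{itemize}

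The paper never uses a contraction formula. It writes $d_2(2\xi)=-p'\omega[e_1^*-e_2^*]$ and $d_2(2\theta)=p''\omega[(e_1^*-e_2^*)\wedge\psi]$ with unknown integers $p',p''$. It then applies the derivation rule twice:
\begin{itemize}
\item to $(2\xi)(12\psi)=12(2\theta)$, which together with $[\xi]=12[(e_1^*-e_2^*)\wedge\psi]$ gives $p''=2p-p'$;
\item to $(2\xi)(2\theta)=0$, which via $(e_1^*-e_2^*)\wedge\theta=\xi\wedge(e_1^*-e_2^*)\wedge\psi=-\nu$ and the torsion-freeness of $H^2(\bP^1,R^4f_*\bZ)=\bZ\omega[\nu]$ gives $p'=p''$.
\end{itemize}

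The trade-off is as follows. Your route gives a formula for $d_2$ on the whole $E_2$-page and explains where the minus sign in degree $2$ comes from. But it rests on the step you flag yourself: a sheaf-level identification of $d_2$ with contraction against the twisting class. That identification has to hold across the multiple fibers and also across the Mumford fiber at $\infty$. Orbifold base change does not turn the fiber at $\infty$ into a torus fiber, so that point would need a separate Mayer--Vietoris comparison of local lifts on the boundary annuli, using the section over $\bP^1\setminus\{0,1\}$.

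The paper's route needs only the general multiplicativity of the Leray spectral sequence on top of the shared degree-one input. Since you already invoke that $d_2$ is a derivation, you could bypass your main obstacle entirely with those two product identities.
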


\begin{proof}
The first differential measures the incompatibility of local lifts of
$12\psi$.  The relations
$$
\widetilde{\rho}_0^{\,3}=t_{v_0+w_0},
\qquad
\widetilde{\rho}_1^{\,4}=t_{v_1+w_1}
$$
show that the contributions from the two finite fibers are
$$
\frac{12\psi(v_0)}{3}=4m,
\qquad
\frac{12\psi(v_1)}{4}=3n.
$$
The fiber at $\infty$ contributes zero as $\psi$ extends. 
Hence, after fixing the common orientation,
$d_2(12\psi)=p\omega$.
Write
$$
d_2(2\xi)=-p'\omega[e_1^*-e_2^*],
\qquad
d_2(2\theta)=p''\omega[(e_1^*-e_2^*)\wedge\psi].
$$
The Leray spectral sequence is multiplicative and $d_2$ is a derivation.
Hence, applying $d_2$ to
$$
(2\xi)(12\psi)=12(2\theta)
$$
and using the identities
$$
\xi\wedge\psi=\theta,
\qquad
[\xi]=12[(e_1^*-e_2^*)\wedge\psi]
$$
gives $p''=2p-p'$.  If
$\nu=e_1^*\wedge e_2^*\wedge e_3^*\wedge e_4^*$, then
$$
(e_1^*-e_2^*)\wedge\theta=-\nu,
\qquad
\xi\wedge(e_1^*-e_2^*)\wedge\psi=-\nu.
$$
Applying $d_2$ to $(2\xi)(2\theta)=0$ therefore gives
$2(p'-p'')\omega\nu=0$. The same coinvariant argument 
as in Lemma \ref{pushforward-cohomology}
gives $H^2(\bP^1,R^4f_*\bZ)=\bZ\omega[\nu]$. 
Hence $p'=p''$, and consequently
$p'=p''=p$.
\end{proof}

\begin{theorem}\label{integral-homology}
If
$|p|=
\left|12\psi(a_0+a_1)\right|=1,
$
then
$$
H^k(Y,\bZ)=
\begin{cases}
\bZ,&k=0,6,\\
0,&1\leq k\leq5.
\end{cases}
$$
Thus $Y$ is an integral homology $6$-sphere.
\end{theorem}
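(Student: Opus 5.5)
We will run the Leray spectral sequence $E_2^{a,b}=H^a(\bP^1,R^bf_*\bZ)\Rightarrow H^{a+b}(Y,\bZ)$, using the inputs already assembled: the $E_2$-terms from Lemma~\ref{pushforward-cohomology} and the differentials from Lemma~\ref{transgression-calculation}. Since the base is a curve, only the columns $a=0,1,2$ are nonzero. Hence the only differentials are $d_2\colon E_2^{0,b}\to E_2^{2,b-1}$, and the sequence degenerates at $E_3$. First we record the low-degree entries. We have $E_2^{0,0}=\bZ$, $E_2^{1,0}=H^1(\bP^1,\bZ)=0$ and $E_2^{2,0}=\bZ\omega$. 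The row $b=1$ is $(12\bZ\psi,0,\bZ\omega[e_1^*-e_2^*])$, the row $b=2$ is $(2\bZ\xi,0,\bZ\omega[(e_1^*-e_2^*)\wedge\psi])$, and $E_2^{0,3}=2\bZ\theta$.

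Since $|p|=1$, Lemma~\ref{transgression-calculation} shows that each of
$$d_2\colon E_2^{0,1}\to E_2^{2,0},\qquad d_2\colon E_2^{0,2}\to E_2^{2,1},\qquad d_2\colon E_2^{0,3}\to E_2^{2,2}$$
sends a generator to $\pm$ a generator, so each is an isomorphism of infinite cyclic groups. Therefore $E_3^{0,1}=E_3^{2,0}=E_3^{0,2}=E_3^{2,1}=E_3^{0,3}=E_3^{2,2}=0$. Combined with $E_2^{1,1}=E_2^{1,2}=0$, every $E_\infty^{a,b}$ with $1\le a+b\le 3$ vanishes, except possibly $E^{1,2}$, which already vanishes at $E_2$. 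This gives $H^k(Y,\bZ)=0$ for $k=1,2,3$. One point needs care in degree $3$: the vanishing of $H^1(\bP^1,R^2f_*\bZ)$ from Lemma~\ref{pushforward-cohomology} enters here, and $E^{2,1}$ is killed by $d_2(2\xi)$.

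For the remaining degrees we do not need the Leray computation. $Y$ is a compact complex, hence oriented, closed $6$-manifold, and it is connected and simply connected by Proposition~\ref{fundamental-group}. So $H^0=H^6=\bZ$, and $H^5\cong H_1=0$ by Poincaré duality. Also $H^4\cong H_2$, and by universal coefficients $H^3$ is free with $\mathrm{Tor}(H_2)\cong \mathrm{Tor}(H^3)=0$, while $H^2=0$ forces $\mathrm{rank}\,H_2=0$; hence $H_2=0$ and $H^4=0$. As a consistency check, the Euler characteristic is $2$, matching Proposition~\ref{euler-characteristic}.

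The main obstacle is making sure no torsion is hidden in the spectral sequence. Specifically, one must check that the identification of $d_2$ really lands on generators of the coinvariant groups $E_2^{2,b}$, and not merely on multiples of them. This is exactly why Lemma~\ref{pushforward-cohomology} computes the $H^2$ terms as coinvariants including the relation $[\xi]=12[(e_1^*-e_2^*)\wedge\psi]$, and why the $H^0$ terms carry the factors $12$ and $2$ that cancel against this relation. With those lemmas granted, the argument reduces to the bookkeeping above plus Poincaré duality.
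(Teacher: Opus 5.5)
Your proposal is correct and matches the paper's proof: the Leray spectral sequence over $\bP^1$, with the $E_2$-terms of Lemma~\ref{pushforward-cohomology} and the three transgressions of Lemma~\ref{transgression-calculation} (isomorphisms when $|p|=1$), kills everything in total degrees $1$ through $3$, and Poincar\'e duality plus universal coefficients handle degrees $4$ through $6$. Invoking simple connectivity to get $H_1=0$ is valid but unnecessary, since $H^1=H^2=0$ already forces $H_1=0$ by universal coefficients.
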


\begin{proof}
The only differentials on the $E_2$-page relevant in total degrees at most $3$ are the three
maps of Lemma~\ref{transgression-calculation}.  When $|p|=1$, all three are
isomorphisms.  Lemma~\ref{pushforward-cohomology} then gives directly
$$
H^1(Y,\bZ)=H^2(Y,\bZ)=H^3(Y,\bZ)=0.
$$
Since $Y$
is closed and oriented of real dimension $6$, Poincar\'e duality and
universal coefficient theorem determine the remaining groups.
\end{proof}

\section{Additional remarks}\label{sec-speculate}

\begin{remark} Other than the discrete parameters in Theorem \ref{main-thm},
there is the continuous parameter $u\in \Delta^*$. We may thus build
a proper, holomorphic submersion $\cY^*\to \Delta^*$ whose fiber over $u$ is $X(u,a_0,a_1)$,
and similarly, for the untwisted version, a proper holomorphic submersion $\cX^*\to \Delta^*$ whose
fiber over $u$ is $X(u)$.

The complex structure on $X(u)$ genuinely varies with $u$.
Its algebraic dimension is $1$, so the map to $\bP^1$ is unique
after requiring that the singular fibers lie over $0,1,\infty$. Then,
the periods of the fiber over $t\in \bP^1\setminus \{0,1,\infty\}$ vary.
Alternatively, the second elliptic curves $E_0'$ and $E_1'$ involved in the bielliptic fibers
(Corollary \ref{fill}) vary depending on the action of $\bZ$ on $L^\times$
and hence depend on the scaling parameter $u\phi_0$ of the linearization,
see Definition \ref{linearization-def}.

Does this family of complex manifolds degenerate in a nice way over the origin
$0\in \Delta$? The geometric construction
is suggestive:~The parameter $u$ is a rescaling parameter for the linearization 
$\phi\colon t_{6P}^*M\to M$ discussed in Section \ref{sec-initial}. Thus as we take the limit $|u|\to 0$, 
the modulus of the annular fundamental domain for multiplication by $u$ on $\bG_m$ 
grows to infinity. Since, furthermore, the linearization $\phi$ lifts the translation $t_{6P}^{-1}$, this
strongly suggests that the limit $$X(0)\coloneqq \lim_{u\to 0} X(u)$$ can be constructed
as follows:~Take the $\bP^1$-bundle $\bP_S(\cO\oplus M)$ and glue the zero section to the infinity
section by $t_{6P}$. This can thus be viewed fiberwise over $t\in \bP^1$ as a ``compactified semiabelian''
degeneration of the corresponding family of complex $2$-tori $X_t(u)$ as $u\to0$.

\begin{conjecture} 
There is a flat, proper filling $\cX\to \Delta$ and the central fiber $X(0)$ is non-normal. The normalization
is $\bP_S(\cO\oplus M)$ and the normalization map glues the zero and infinity sections via $t_{6P}$.
\end{conjecture}

The family $\cY^*\to \Delta^*$ is particularly appealing as it is a smooth $\bS^6$-bundle
over the punctured disk. It is less clear what the limit should be, but a natural guess 
is a logarithmic transform of $X(0)$, whose fibers over $0$ and $1$ have multiplicity $3$ 
and $4$, and whose reductions are ``semiabelian type'' degenerations of bielliptic surfaces.
\end{remark}

\begin{remark} The rank $4$ local system of first homology groups of the fibers of $Y\to \bP^1$ contains
a natural rank $3$ sub-local system, given by the kernel of the unique invariant (up to scale) 
covector $\psi$ of Lemma \ref{covect}. Geometrically, this rank $3$ sub-local system corresponds
to a real $3$-dimensional subtorus of the fibers.
Quotienting fiberwise by the translational action of this subtorus, we reduce the fibers $Y_t$
to circles, but some care must be taken at $t=0,1,\infty\in \bP^1$.

At $t=\infty$, the kernel of $\psi$
contains the vanishing cycles $W_{-2}H_1$ of the nearby fiber. The quotient by the torus
spanned by vanishing
cycles can be interpreted in the following manner:~Take the Clemens collapse \cite{Cle77} from
the nearby fiber to $Y_\infty$ and then compose with the moment mapping on the glued $dP_6$
to a glued hexagon (which is a polytope of this Mumford construction). 

There is, in the construction, a distinguished edge of the glued hexagon. Indeed, 
${\rm Aut}^0(Y)\simeq \bC^*$ (acting by scaling on the prequotient 
$M^\times$ as a $\bG_m$-torsor) and the fixed locus of this $\bC^*$-action is one
of the three glued edges. This distinguished edge determines a further quotient
of the hexagonal torus to a circle. Thus over $\infty$, this quotient map of the first paragraph
extends continuously to the composition of
the moment mapping with a further quotient to a circle.

At $0$ or $1$, the situation is different. Here, the rank $3$ sub-local system contains
the local rank $2$ sub-local system on which the monodromy about $0$ or $1$ is nontrivial.
Quotienting by the corresponding subtorus gives a multiple elliptic curve
isogenous to $E'$
because the twisted action defining the logarithmic transform producing the bielliptic multiple fiber 
acts by a torsion translation on $E'$.
The further quotient collapses this multiple elliptic curve to
a circle, with multiplicity $3$ or $4$.

The conclusion is that the quotient by the torus associated to this natural rank $3$ sub-local
system is a map $\bS^6\to Q$, where $Q$ is a Seifert fibered $3$-manifold over $\bP^1\simeq \bS^2$ 
with a multiple circle of multiplicity $3$ over $0$ and multiplicity $4$ over $1$. The condition 
$|12\psi(a_0+a_1)|=1$ in Theorem \ref{main-thm} translates exactly into the condition
$Q\simeq \bS^3$ with $\bS^3\to \bS^2$ the Seifert fibration with multiplicities $3$ and $4$ over
two points \cite[Ch.~1]{Orl72}. Thus the quotient map is a map $\bS^6\to \bS^3$.
\end{remark}

\begin{remark} A very general smooth fiber $F$ of $X(u)\to\bP^1$, or equivalently $Y\to\bP^1$, 
is not an abelian variety, but one can check \cite[\S 9.1]{AC26} that it admits a monodromy-invariant
 Hodge class $$\xi\in H^{1,1}(F)\cap H^2(F,\bZ).$$ The class $\xi$ (see Lemma \ref{invariant-form})
 is thus the first Chern class of a line bundle. Its type, as a symplectic form, is $(1,6)$. On $F^0H_1(F,\bC)$, the
 associated Hodge form $i\xi(v,\bar v)$ has signature $(1,1)$. Thus we have a natural
 period mapping $$\bP^1\setminus \{0,1,\infty\}\to {\rm Sp}(\bZ^4,\xi)\backslash {\rm Sp}_4(\bR)/{\rm U}(1,1).$$
 The difference from the abelian case is that the action of ${\rm Sp}(\bZ^4,\xi)$ on the domain of 
 complex tori $\bD\coloneqq {\rm Sp}_4(\bR)/{\rm U}(1,1)$ is not properly discontinuous. But, after all, the monodromy
 is not all of ${\rm Sp}(\bZ^4,\xi)$---indeed, the monodromy preserves the filtration (\ref{filt1}, \ref{filt2}) and so lies
 in a parabolic subgroup $P_\bZ\subset \Sp(\bZ^4,\xi)$.
 B.~Klingler suggested the possibility that the monodromy group
 could act properly discontinuously on $\bD$, or at least on an analytic open set
 containing the period image.
\end{remark}

\begin{remark} The constructions of Sections \ref{sec-initial} and \ref{sec-log-mod} work in significantly greater
generality than this one example. Indeed, consider an elliptic surface $\pi\colon S\to C$ with section, a line bundle
$M\in {\rm Pic}(S)$ which lies in ${\rm Pic}^0$ of the general fiber, and a translation $t\in {\rm Aut}(S/C)$.
Suppose $\cH om(t^*M,M)\simeq \pi^*\cO_C(\pm E)$ for $E$ an effective divisor 
(e.g.~this holds when $C\simeq \bP^1$ and $t$ preserves the components of the Kodaira fibers).
We get an associated linearization $\phi\in {\rm Aut}(M^{\times,\circ})$ of the principal
$\bG_m$-bundle $M^\times$ over the complement of the fibers over $E$.

Then $\phi$ has either zeros or poles along the inverse image of $E$, according to the sign.
In the former case, scale $\phi$ by a sufficiently small $u\in\bC^*$; in the latter,
by a sufficiently large one. This action is properly discontinuous by the argument 
of Proposition \ref{discontinuous} (since the linearization 
has only zeroes or only poles, some scaling is uniformly contracting
or expanding). Furthermore, we can fill the quotient by a Mumford construction, 
as in Proposition \ref{mumford-1}. One may then perform log transforms, at some
finite number of fibers.

It is unclear what class of $6$-manifolds can arise from such a construction.
This line of inquiry is pursued in {\it Threefold Explorer}, an applet powered
by SageMath. It automates 
van Kampen and Mayer--Vietoris computations of the fundamental group and integral
cohomology of a broad class of such constructions \cite{Eng26}.
\end{remark}

\end{document}